\theoremstyle{definition}
\newtheorem{lemma}{Lemma}
\newtheorem*{definition*}{Definition}
\newtheorem{theorem}{Theorem}
\newtheorem{corollary}{Corollary}
\newtheorem{remark}{Remark}
\newtheorem{proposition}{Proposition}
\newcommand{\Z}{\mathbb{Z}}
\newcommand{\C}{\mathbb{C}}
\newcommand{\N}{\mathbb{N}}
\newcommand{\R}{\mathbb{R}}
\DeclareMathOperator*{\Res}{Res}
\renewcommand{\P}{\mathbb{P}}
\newcommand{\sgn}{\mathrm{sgn\,}}
\newcommand{\E}{\mathrm{\mathbb E}}
\newcommand{\floor}[1]{\ensuremath{\left\lfloor #1 \right\rfloor}}
\renewcommand{\vec}[1]{ \ensuremath{ {\bf#1}} }
\title[The height of Watermelons with wall]{The height of Watermelons with wall}
\author[Thomas Feierl]{Thomas Feierl$^\dagger$}
\address{Thomas Feierl \\ Fakult\"at f\"ur Mathematik \\ Universit\"at Wien \\ Nordbergstr. 15 \\ 1090 Wien \\ Austria}
\thanks{$^\dagger$ Research supported by the Austrian Science Foundation FWF, grant S9607-N13}
\date{February 19, 2008} 
\begin{document}

\begin{abstract}
		We derive asymptotics for the moments as well as the weak limit of the height distribution of
		watermelons with $p$ branches with wall.
		This generalises a famous result of de Bruijn, Knuth and Rice~\cite{MR0505710} on the average
		height of planted plane trees, and results by Fulmek~\cite{Fulmek} and Katori et al.~\cite{katori}
		on the expected value, respectively higher moments, of the height distribution of watermelons with two branches.

		The asymptotics for the moments depend on the analytic behaviour of certain multidimensional Dirichlet series. In order to
		obtain this information we prove a reciprocity relation satisfied by the derivatives of one of
		Jacobi's theta functions, which generalises the well known reciprocity law for Jacobi's theta functions.
\end{abstract}

\maketitle
\section{Introduction}

The model of \emph{vicious walkers} was introduced by Fisher~\cite{MR751710}. He gave a number of
applications in physics, such as modelling wetting and melting processes
(for more applications in physics such as polymer networks and fibrous structures, we refer the reader to \cite{MR1384738,schehr} and references therein).

In general, the model of vicious walkers is concerned with $p$ random walkers on a $d$-dimensional lattice.
In the lock step model, at each time step all of the walkers move one step in any of the allowed directions, such that at no time any two random walkers share the same lattice point.

A configuration that attracted much interest amongst mathematical physicists and combinatorialists is the \emph{watermelon configuration},
which is a special case of the one dimensional vicious walker model on the integer lattice.
In this configuration, the walkers are initially positioned at the points $0,2,\dots,2p-2$.
At each time step, the particles may simultaneously move one step to the left or to the right on the lattice subject to the model restrictions.
After a fixed number of time steps, $2n$ say, the walkers are required to return to their starting points.
This configuration can be studied with or without presence of an impenetrable wall, and with or without deviation.
In case of presence of such a wall, the integer lattice is replaced by the lattice of non-negative integers.
The interpretation being that the wall is located at position zero (or, more precisely, right below zero).
Particles are allowed to sit ``at the wall'' but may not jump over the wall so that no particle will ever visit a negative integer lattice site.
Tracing the paths of the vicious walkers on the lattice through time, we obtain a set of lattice paths with certain properties.
The precise properties are given in the definition below, serving as our definition of the watermelon configuration with wall restriction underlying this paper.
\begin{definition*}
	Consider the lattice in $\R^2$ spanned by the two vectors $(1,1)$ and $(1,-1)$.
	A \emph{$p$-watermelon of length $2n$ with wall restriction} is a set of $p$ lattice walks, each of which consists of $2n$ steps of the form $(1,\pm 1)$, such that
	\begin{enumerate}
		\item for $j=1,\dots,p$, the $j$-th path starts at $(0,2j-2)$ and ends at $(2n,2j-2)$,
		\item the lattice walks are pairwise vertex disjoint, and
		\item no lattice walk passes below the horizontal line passing through the origin.
	\end{enumerate}
\end{definition*}
In the above definition, Condition~(3) corresponds to the wall restriction, whereas Condition~(2) does not capture our ``vicious constraint'' by itself.
Indeed, in order to rule out particles jumping over others, we need as an additional ingredient from Condition~(1) the fact that the particles are initially positioned at sites all of which have the same parity.
An example illustrating this definition is depicted in Figure~\ref{fig:wm_with_wall_ex}, where, for the moment, the broken line labelled $13$ should be ignored.
%
%

\begin{figure}[width=8cm]
		\begin{center}
				\input{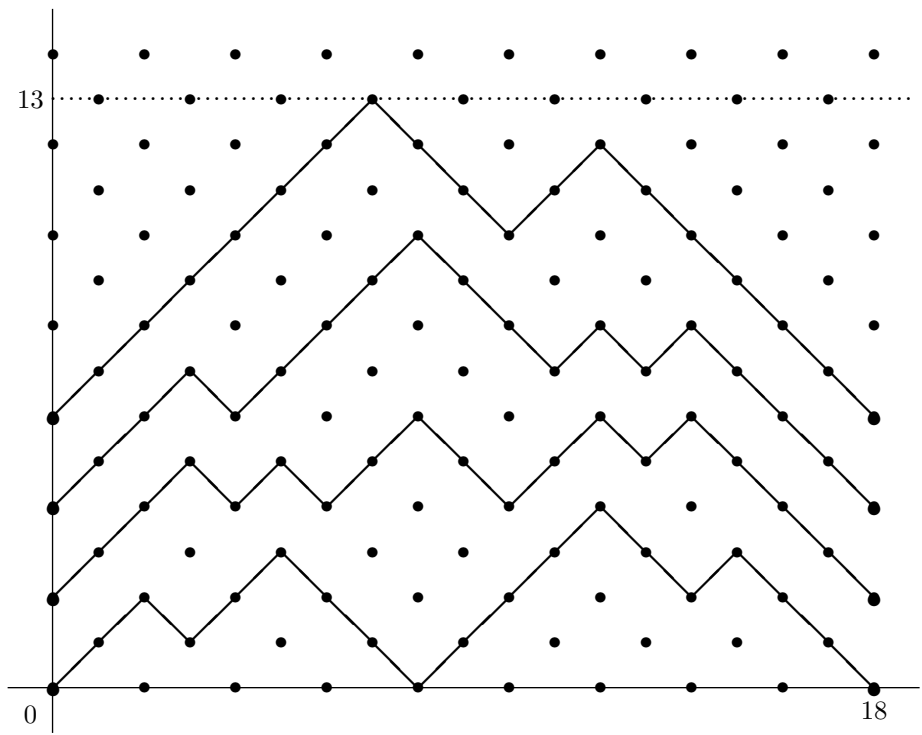}
		\end{center}
		\caption{Example of a $4$-watermelon of length 18 with wall and height $13$}
		\label{fig:wm_with_wall_ex}
\end{figure}

Since Fisher's introduction~\cite{MR751710} of the vicious walkers model numerous papers on this 
subject have appeared. While early results mostly analyse vicious walkers in a continuum limit,
there are nowadays many  results for certain configurations directly based on the lattice path description above.
For example, Guttmann, Owczarek and Viennot~\cite{MR1651492} related the star and watermelon configurations
to the theory of Young tableaux and integer partitions, and re-derived results for the total number
of stars and watermelons without wall. Later, Krattenthaler, Guttmann and Viennot~\cite{MR1801472}
proved new, exact as well as asymptotic, results for the number of certain vicious walkers with wall.
Recently, Krattenthaler~\cite{Kratt_Wall_Contact} analysed the number of contacts of the bottom most walker in
the case of watermelons with wall,
continuing earlier work by Brak, Essam and Owczarek~\cite{MR1832080}.

In 2003, Bonichon and Mosbah~\cite{MR2022577} presented an algorithm for uniform random generation of watermelons,
which is based on the counting results by Krattenthaler, Guttmann and Viennot~\cite{MR1801472}
(see Theorems 1 and 6 therein).
Amongst other things they used their generator for obtaining experimental results on the \emph{height of watermelons}.
\begin{definition*}
	The \emph{height of a $p$-watermelon} is the smallest number $h$ such that the upper most branch does not cross the horizontal line $y=h$.
\end{definition*}
An illustration of this definition is given in Figure~\ref{fig:wm_with_wall_ex}, depicting a $4$-watermelon of height $13$ (indicated by the horizontal dashed line).

The parameter ``height'' has attracted the interest of quite a few people.
It is the purpose of the following two paragraphs to provide the reader with a rough overview of known results on and related to the central question studied in this manuscript.

For $p=1$, our watermelon definition reduces to the definition of so called Dyck paths.
A fact that is also nicely illustrated by the bottom most path in Figure~\ref{fig:wm_with_wall_ex}.
It is well-known that these Dyck paths are in bijection (left-order-traversal) with planted plane trees,
and that under this bijection the height of a Dyck path corresponds to the height of the corresponding tree.
The asymptotic behaviour of the average height of planted plane trees was determined by de Bruijn, Knuth and Rice~\cite{MR0505710},
that is, they solved the average height problem for $1$-watermelons with wall.
Recently, Fulmek~\cite{Fulmek} extended their reasoning and determined the asymptotic behaviour of the average height
of $2$-watermelons with wall.
Katori, Izumi and Kobayashi~\cite{katori} considered the diffusion scaling limit of $2$-watermelons, and obtained the
leading asymptotic term for all moments of the height distribution as well as a central limit theorem.
The limiting process of $p$-watermelons has been investigated by Gillet~\cite{gillet}. He succeeded in proving the convergence of (properly scaled) watermelons to a certain limiting process, which he
characterised by a system of stochastic differential equations.
It is interesting to note that this limiting process can be interpreted as a variant of Dyson's Brownian motion model~\cite{MR0148397}.
In~\cite{gillet}, the author also determined the one-dimensional distribution of this limiting process, and proved that it is equal to  the (properly scaled) distribution of the eigenvalues in the $(2p+1)\times(2p+1)$-dimensional Gaussian antisymmetric hermitian matrix ensemble (for definitions of the Gaussian ensembles, we refer the reader to Mehta~\cite{MR2129906}).
For closely related models without the return condition (star configurations), Katori and Tanemura~\cite{MR2029612} proved convergence of the one-dimensional distributions to the eigenvalue distributions of the Gaussian unitary ensemble (with wall restriction) and the Gaussian orthogonal ensemble (without wall restriction).
Tracy and Widom~\cite{MR2326237} studied Gillet's limiting process (Dyson's Brownian excursion) in the limit $p\to\infty$, and give Fredholm determinant expressions for the top most and bottom most branch. As applications, they studied the area under these two branches.

In this paper we rigorously analyse the height of $p$-watermelons of length $2n$ with wall, and obtain asymptotics
for all moments of the height distribution as $n\to \infty$ as well as a central limit theorem. In particular,
we show that the $s$-th moment behaves
like $s\kappa_{s}^{(p)}n^{s/2}-3\binom{s}{2}\kappa_{s-1}^{(p)}n^{(s-1)/2}+O(n^{s/2-1}+n^{p/2-p^2}\log n)$ as $n\to\infty$
for some explicit numbers $\kappa_{s}^{(p)}$, see Theorem~\ref{thm:sth-moment-asymptotics} at the end of
Section~\ref{sec:moments}.
The nature of our result explains the somewhat inconclusive predictions in \cite{MR2022577}.
To be more specific, Bonichon and Mosbah~\cite{MR2022577} predicted, based on numerical experiments, that
$\kappa_1^{(p)}\approx \sqrt{(1.67p-0.06)}$.
Although it does not seem unlikely that the constant $\kappa_{1}^{(p)}$, as given in Theorem~\ref{thm:sth-moment-asymptotics},
behaves like $\sqrt p$ as $p\to \infty$, a rigorous proof is still lacking and work in progress.
		
The proof of our result can be summarised as follows. As a first step, we represent the total number of watermelons
and the number 
with height restriction in terms of certain determinants (see Lemma~\ref{lem:exact_expressions}), the entries being
sums of binomial coefficients. From these determinants we then obtain an exact expression for the $s$-th moment of the height 
distribution. After normalisation we may apply Stirling's formula and obtain an expression that
can be asymptotically evaluated using Mellin transform techniques (see Lemma~\ref{lem:g_a_asymptotics}).
This kind of approach goes back to de Bruijn, Knuth and Rice~\cite{MR0505710}. 
Fulmek~\cite{Fulmek} adopted their approach for the asymptotic analysis of $2$-watermelons. The new objects which
arise here (and, in general, when extending this approach to the asymptotic analysis of $p$-watermelons)
are certain multidimensional Dirichlet series (instead of Riemann's zeta function as in \cite{MR0505710}).
An additional complication with which one has to cope is the increasing number of cancellations of leading
asymptotic terms that one encounters in the calculations while the number $p$ of branches becomes
bigger. Thus, while a brute force approach will eventually produce a result for any fixed $p$
(this is, in essence, what Fulmek~\cite{Fulmek} and Katori et al.~\cite{katori} do for $p=2$), the main 
difficulty that we have to overcome in order to arrive
at an asymptotic result for {\it arbitrary} $p$ is to trace the roots of these cancellations. We
accomplish this with the help of Lemma~\ref{lem:wm_with_wall_asymptotic_for_sum}. It allows us to exactly pin down
which cancellations take place and to extract explicit formulae for the first two terms which survive the cancellations.
The multidimensional Dirichlet series which arise in our analysis are the subject of the subsequent section.
What we need is information on their poles. This information is obtained with the help of a relation that
generalises the reciprocity law for Jacobi's theta functions (see Equation~(\ref{eq:theta_derivative_reciprocity})),
that is proved in Proposition~\ref{lem:theta_derivative_reciprocity}.
We note that our definition of these Dirichlet series differs slightly from Fulmek's definition, which makes
the analysis somewhat easier. 
These Dirichlet series that we encounter are related to so-called twisted multivariate zeta functions, studied, e.g., by
de Crisenoy~\cite{MR2278751} and de Crisenoy and Essouabri~\cite{deCrisenoy}. However, their results cannot be used since they
do not apply to our multidimensional Dirichlet series, which are explicitly excluded in these two papers.
They can also be found within a class of multidimensional Dirichlet series studied by 
Cassou-Nogu{\`e}s~\cite{MR692105}. In principle we could apply her results to our Dirichlet series and would obtain information
on the poles of the analytic continuation of these series. But this would be cumbersome, and in our case it is more
straightforward to obtain this information using the generalised reciprocity relation (see end of Section~\ref{sec:jacobi}), which we are going to need in the proof of Theorem~\ref{thm:sth-moment-asymptotics} anyway.

We mention that small modifications immediately yield analogous results for $p$-watermelons with a horizontal wall
positioned 
at some negative integer. Also, the analysis of the height distribution of watermelons {\it without wall} can be 
accomplished in a completely analogous fashion (see~\cite{watermelons:withoutwall}, and also \cite{katori2,schehr} for results in the continuous setting).

The paper is organised as follows.
In Section~\ref{sec:main results}, we state and discuss the main results of this manuscript.
Section~\ref{sec:jacobi} is devoted to the study of certain multidimensional Dirichlet series and related exponential sums that is crucial to the proof of our theorems.
In this section we also give our reciprocity relation generalising Jacobi's reciprocity law (see Proposition~\ref{lem:theta_derivative_reciprocity}.
The last two sections contain the proofs of Theorem~\ref{thm:sth-moment-asymptotics} and Theorem~\ref{thm:centrallimitlaw}, respectively.

We close this section by fixing some notation. Vectors are denoted using bold face letters and are assumed
to be $p$-dimensional row vectors. Further, we make use of the $1$-norm and the $2$-norm of vectors,
viz. $|\vec w|_1=w_0+\cdots+w_{p-1}$ and $|\vec w|_2^2=w_0^2+\cdots w_{p-1}^2$. Finally, we define
$\vec v^\vec w=v_0^{w_0}\dots v_{p-1}^{w_{p-1}}$. The relation $\vec v \ge \vec w$ is to be understood component-wise.

\section{Main results}
\label{sec:main results}
In this section, we state the main results of this manuscript, give some comments and discuss related results.
We start by fixing some notation.
Let us denote by $M_{2n,h}^{(p)}$ the number of $p$-watermelons with wall with length $2n$ and height strictly smaller than $h$.
Further, we write $M_{2n}^{(p)}$ for the total number of $p$-watermelons with length $2n$. Note that $M_{2n}^{(p)}=M_{2n,h}^{(p)}$
for $h\ge n+2p-1$ and $M_{2n,h}^{(p)}=0$ for $h< 2p$.

Now, let $\mathfrak W_{n}^{(p)}$ denote the set of $p$-watermelons of length $2n$, and let $\P$ denote
the uniform probability measures on these sets, and let $H_{n,p}$ denote the random variable ``height''
on the probability space $\left( \mathfrak W_{n}^{(p)},2^{\mathfrak W_{n}^{(p)}},\P \right)$.
The main results stated below consist of asymptotics for the quantities
\[
\E \left(H_{n,p}\right)^s,\ s\in\N,
\qquad
\textrm{and}
\qquad
\P\left\{ n^{-1/2}H_{n,p}\le t \right\},
\]
where $\E$ denotes the expectation with respect to $\P$.

All asymptotic results involve the infinite sums
\[
\vartheta_{2a}(t)=\sum_{n=-\infty}^{\infty}(-4\pi^2n^2)^{a}e^{-n^2\pi t},
\qquad
a\in\N.
\]
These sums are seen to be proportional to derivatives of a variant of Jacobi's theta function, and come from determining asymptotics for certain sums of binomial coefficients.
For details, we refer to the proof of Theorem~\ref{thm:sth-moment-asymptotics}.
The study of these sums and -- via Mellin transform -- related Dirichlet series is fundamental for the derivation of our main results and is the content of Section~\ref{sec:jacobi}.

\begin{theorem}
		Set $M_p = 2^{p^2}\prod_{i=0}^{p-1}(2i+1)!$ 
		and 
		$T_p(t)=\det\limits_{0\le i,j<p}\left( \vartheta_{2i+2j+2}(t) \right)$.
		For $s\in\N$, the $s$-th moment of the height distribution of $p$-watermelons with wall
		satisfies
		\begin{equation}
				\E H_{n,p}^s = s\kappa_s^{(p)}n^{s/2}-3\binom{s}{2}\kappa_{s-1}^{(p)}n^{(s-1)/2}-\frac{3}{2}
				+O\left( n^{s/2-1}+n^{p/2-p^2}\log n \right)
				\label{eq:sth-moment-asymptotics}
		\end{equation}
		as $n\to\infty$, where 
		\[
		\kappa_{s}^{(p)} = \frac{\pi^{s/2}}{2}\int_{0}^{\infty}t^{-1-s/2}
		\left( 1-\frac{t^{p^2+p/2}}{(-\pi)^{p^2}}\frac{T_p(t)}{M_p} \right) dt,\qquad s>0.
		\]
		\label{thm:sth-moment-asymptotics}
\end{theorem}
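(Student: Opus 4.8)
The plan is to obtain the two stated asymptotic terms by dividing the previous two lemmas against each other, and then to identify the resulting constant with the claimed integral via the reciprocity relation of Section~\ref{sec:jacobi}. First I would substitute the asymptotics of Lemma~\ref{lem:wm_with_wall_asymptotic_for_sum} for the numerator and of Lemma~\ref{lem:wm_with_wall_asymptotic_total} for $M_{2n}^{(p)}$ into the defining relation~(\ref{eq:wm_with_wall_sth_moment_def}). Since $4^{\binom{p}{2}}=2^{p^2-p}$, the factor $2^{-p}\binom{2n}{n}^pn^{-p^2}$ of the numerator cancels against $2^{p^2-p}\left(\prod_{i=0}^{p-1}(2i+1)!\right)\binom{2n}{n}^pn^{-p^2}$ up to $2^{-p^2}/\prod_{i=0}^{p-1}(2i+1)!=1/M_p$, while the constant $-\tfrac32\,2^{p^2}\prod_{i=0}^{p-1}(2i+1)!=-\tfrac32M_p$ becomes exactly $-\tfrac32$. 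Absorbing the relative error $O(n^{-1})$ of $M_{2n}^{(p)}$ into the stated $O$-term, this already yields~(\ref{eq:sth-moment-asymptotics}) with
\[
\kappa_s^{(p)}=\frac{\lambda_s}{M_p}.
\]
It then remains to evaluate this quotient as the claimed integral.

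To do so I would insert the generic form $\omega_{s-1,\vec a}=\tfrac12\Gamma(\tfrac s2+|\vec a|_1)Z_{2\vec a}(\tfrac s2+|\vec a|_1)$ from Lemma~\ref{lem:g_a_asymptotics} into the definition of $\lambda_s$ and replace the product $\Gamma\cdot Z_{2\vec a}$ by its integral representation~(\ref{eq:Z_riemann}). Using $(2\pi i)^{2|\vec a|_1}=(-4\pi^2)^{|\vec a|_1}$, the prefactor $(-4)^{|\vec a|_1}$ in $\lambda_s$ cancels, and pushing the sum over $\vec a$ inside the integral and invoking the multilinearity of the determinant (each row index $i$ carrying its own $a_i$) collapses the sum of determinants into a single determinant:
\[
\lambda_s=-\frac{\pi^{s/2}}{2}\int_0^\infty t^{s/2-1}\left(\det_{0\le i,j<p}S_{i,j}(t)-M_p\right)dt,\qquad S_{i,j}(t)=\sum_{a\ge 0}\frac{(2i+2j+2)!}{(i+j+1-a)!\,(2a)!}\left(\frac t\pi\right)^a\vartheta_{2a}(t),
\]
where the subtracted $M_p$ accounts for the $\left[\vec a=\vec 0\right]$ term, the $\vec a=\vec 0$ determinant being $\det\left(\tfrac{(2i+2j+2)!}{(i+j+1)!}\right)=M_p$.

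The heart of the argument is a change of variables $t\mapsto 1/t$ combined with the reciprocity relation~(\ref{eq:vartheta_a_identity}). After $t\mapsto 1/t$ the integrand involves $\vartheta_{2a}(1/t)$, which I would rewrite through Corollary~\ref{cor:vartheta_a_identity}. Collecting terms, the coefficient of $\vartheta_{2b}(t)$ in $S_{i,j}(1/t)$ acquires the factor $\sum_{a=b}^{N}\tfrac{(-1)^a}{(N-a)!(a-b)!}=\tfrac{(-1)^b}{(N-b)!}(1-1)^{N-b}$ with $N=i+j+1$, which vanishes unless $b=N$. Hence the double sum telescopes to the single surviving term
\[
S_{i,j}(1/t)=(-1)^{i+j+1}\pi^{-(i+j+1)}t^{\,i+j+3/2}\,\vartheta_{2i+2j+2}(t),
\]
and extracting the row factors $(-1)^i\pi^{-i}t^{i}$, the column factors $(-1)^j\pi^{-j}t^{j}$ and the common factor $-\pi^{-1}t^{3/2}$ yields $\det_{0\le i,j<p}S_{i,j}(1/t)=(-1)^p\pi^{-p^2}t^{\,p^2+p/2}T_p(t)$. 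Because $(-1)^p=(-1)^{p^2}$ this equals $\tfrac{t^{p^2+p/2}}{(-\pi)^{p^2}}T_p(t)$; feeding it back, dividing by $M_p$, and using that $dt/t$ is invariant under $t\mapsto 1/t$ produces precisely the stated formula for $\kappa_s^{(p)}$.

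The main obstacles I anticipate are, first, the combinatorial collapse: one must apply the reciprocity with exactly the right indices so that the alternating sum $(1-1)^{N-b}$ appears, which is what makes the seemingly complicated determinant telescope onto $T_p$. Second, one has to justify interchanging the summation over $\vec a$ with the integral in~(\ref{eq:Z_riemann}) and confirm convergence of the integral defining $\kappa_s^{(p)}$: at $t\to 0$ the collapse identity gives $\tfrac{t^{p^2+p/2}}{(-\pi)^{p^2}}T_p(t)/M_p\to 1$, so the bracket vanishes, while at $t\to\infty$ one has $T_p(t)\to 0$ and integrability is controlled by the factor $t^{-1-s/2}$ for $s>0$. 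Finally, the derivation uses the generic form of $\omega_{s-1,\vec a}$, valid when $z=\tfrac s2+|\vec a|_1$ avoids the pole of $Z_{2\vec a}$, i.e.\ when $p\ne s$; the borderline case $p=s$, in which $\omega_{s-1,\vec a}$ is the Laurent constant of Lemma~\ref{lem:g_a_asymptotics}, should follow by analytic continuation in $s$, both $\lambda_s/M_p$ and the convergent integral being continuous there.
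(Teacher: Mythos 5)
Your proposal is correct and follows essentially the same route as the paper's own proof: dividing Lemma~\ref{lem:wm_with_wall_asymptotic_for_sum} by Lemma~\ref{lem:wm_with_wall_asymptotic_total} to get $\kappa_s^{(p)}=\lambda_s/M_p$, rewriting this via Equation~(\ref{eq:Z_riemann}) as an integral of $\det S_{i,j}(t)-M_p$, collapsing the determinant through the reciprocity relation~(\ref{eq:vartheta_a_identity}) and the substitution $t\mapsto 1/t$, and treating $s=p$ by a limit/continuity argument (the paper phrases this as interchanging the limit $z\to p/2$ with the sum over $\vec a$, which is justified because the polar parts cancel). Your explicit verification of the telescoping via the factor $(1-1)^{N-b}$ is a detail the paper leaves to the reader, but it is the same computation the paper's one-sentence reciprocity step encapsulates.
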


This theorem is seen to be valid even for $s\in\C$ with $s$ having positive real part.
For details we refer the reader to Remark~\ref{rem:complex_moments}, where we give some details on where to change the proof of Theorem~\ref{thm:sth-moment-asymptotics} as presented in this manuscript.

\begin{table}
\label{tab:moment_dominant_coef}
\caption{
This table gives numerical approximations for $s\kappa_s^{(p)}$ for small values of $s$ and $p$.
The quantity $s\kappa_s^{(p)}$ is the coefficient of the dominant part of the asymptotics
for the $s$-th moment of the height of $p$-watermelons (see  Theorem~\ref{thm:sth-moment-asymptotics}).
The calculations have been carried out using the integral representation for $\kappa_s^{(p)}$ as given in
Theorem~\ref{thm:sth-moment-asymptotics}. The results shown here conform
well with with numerical results obtained by Fulmek~\cite{Fulmek} and Katori et al.~\cite{katori}.
}
\begin{tabular}{c|r|r|r|r}
		$s\kappa_{s}^{(p)}$ & \centering{$s=1$} & $s=2$ & $s=3$ & $s=4$ \\
		\hline
		$p=1$ & $\sqrt\pi$ & $3.289\dots$ & $6.391\dots$ & $12.987\dots$\\
		$p=2$ & $2.577\dots$ & $6.790\dots$ & $18.282\dots$ & $50.306\dots$ \\
		$p=3$ & $3.207\dots$ & $10.429\dots$ & $34.371\dots$ & $114.817\dots$ \\
		$p=4$ & $3.742\dots$ & $14.141\dots$ & $53.939\dots$ & $207.712\dots$\\
		$p=5$ & $4.215\dots$ & $17.898\dots$ & $76.536\dots$ & 329.655\dots
\end{tabular}
\end{table}

Some numerical approximations for the coefficient of the dominant term of the asymptotics proved in Theorem~\ref{thm:sth-moment-asymptotics} are shown in Table~\ref{tab:moment_dominant_coef}. But Theorem~\ref{thm:sth-moment-asymptotics} does not only give the dominant term of the asymptotics of the $s$-th moment of the height distribution of $p$-watermelons but also the second order term. So, for example,
we obtain the more precise asymptotics
\begin{align*}
		\mathfrak \E H_{n,1} &= \sqrt{\pi n} - \frac{3}{2} + O\left( n^{-1/2}\log n \right), \qquad n\to\infty, \\
		\mathfrak \E H_{n,2} &=  2.577\ldots \sqrt{n} -\frac{3}{2} +O\left( n^{-1/2} \right), \qquad n\to\infty, \\
		\mathfrak \E H_{n,2}^2 &=  6.790\ldots n -3.866\dots\sqrt n +O\left( 1 \right), \qquad n\to\infty.
\end{align*}
The asymptotic result for $\E H_{n,1}$ as stated above originally appeared in~\cite{MR0505710}.

\begin{theorem}
		For $t\in(0,\infty)$ fixed, the random variable $H_{n,p}$ on the set of $p$-watermelons of length $2n$ with wall
		satisfies
	\begin{equation}
			\P\left\{ \frac{H_{n,p}+2}{\sqrt n}\le t\right\} = \frac{\pi^{p/2}t^{-2p^2-p}}{(-2)^{p^2}\prod_{i=0}^{p-1}(2i+1)!}
			\det_{0\le i,j<p}\Big( \vartheta_{2i+2j+2}\left( \frac{\pi}{t^2} \right)\Big)
			+O\left( \frac{1}{nt} \right)
			\label{eq:centrallimitlaw}
	\end{equation}
	as $n\to\infty$,
	where the constant implied by the $O$-term is independent of $t$.
\label{thm:centrallimitlaw}
\end{theorem}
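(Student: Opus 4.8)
The plan is to start from $F_n(h)=\P\{H_{n,p}\le h\}=M_{2n,h+1}^{(p)}/M_{2n}^{(p)}$ and to evaluate it at $h=t\sqrt n-2$, so that the parameter entering the determinant~(\ref{eq:exact_M_2n_h}) is $h+1=t\sqrt n-1$ and, crucially, the period becomes $(h+1)+1=t\sqrt n$ \emph{exactly}. Expanding the single sum in each entry of~(\ref{eq:exact_M_2n_h}) turns the determinant into
\[
	M_{2n,t\sqrt n-1}^{(p)}=\sum_{\vec m\in\Z^p}\det_{0\le i,j<p}\left(\binom{2n}{n+m_it\sqrt n+i-j}-\binom{2n}{n+m_it\sqrt n-1-i-j}\right),
\]
the summand $\vec m=\vec 0$ being exactly $M_{2n}^{(p)}$ (so the trivial contribution is folded in automatically, via the $m=0$ summands below). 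First I would factor $\binom{2n}{n}$ out of each of the $p$ rows, discard the range $|m_i|\ge n^{\varepsilon}/t$ (which is $O(n^{-M})$ by Stirling's formula), and replace every quotient of binomial coefficients by its expansion from Lemma~\ref{lem:approx_binom_quot}. Since the large part of each argument is $m_it\sqrt n$, the Gaussian prefactor produced is $e^{-m_i^2t^2}$ and the polynomials $\phi_u$ are evaluated at $m_it$; both are free of $n$. This is precisely the device opening the proof of Lemma~\ref{lem:wm_with_wall_asymptotic_for_sum}, specialised to the single height $h=t\sqrt n-2$ instead of a sum over all $h$.

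The core difficulty is the same cascade of cancellations met in Lemma~\ref{lem:wm_with_wall_asymptotic_for_sum}. I would introduce the generalised determinant $D_n(\vec x,\vec y,z)$ obtained by writing $x_i-y_j$ for $i-j$ and $-z-x_i-y_j$ for $-1-i-j$, observe that it vanishes whenever $x_i=x_j$, $y_i=y_j$, $z+x_i+x_j=0$ or $z+y_i+y_j=0$, and deduce that $D_n$ is divisible by the Vandermonde-type product $V(\vec x,\vec y,z)$ of total degree $2p^2$ already appearing there. As each unit of degree carries a factor $n^{-1/2}$, this forces the leading contribution to sit at order $n^{-p^2}$ and automatically absorbs all lower-order cancellations: a naive term-by-term estimate would suggest a spurious larger contribution (for $p=2$, the exponent patterns $(u_0,u_1)=(2,4)$ and $(4,2)$ each produce a non-zero $n^{-3}$ term, but these cancel exactly). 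Extracting the leading coefficient by comparing the coefficient of $\prod_ix_i^{2i+1}y_i^{2i+1}$, exactly as in~(\ref{eq:C_n_identity}), one meets in place of the functions $G_{s,\vec a}(n)$ the $n$-free theta sums
\[
	\Theta_{\vec a}(t)=\prod_{i=0}^{p-1}\sum_{m\in\Z}e^{-t^2m^2}(tm)^{2a_i}=(-4\pi^2)^{-|\vec a|_1}t^{2|\vec a|_1}\prod_{i=0}^{p-1}\vartheta_{2a_i}\!\left(\frac{t^2}{\pi}\right),
\]
so that no asymptotic expansion of these sums (and hence none of the Mellin analysis of Lemma~\ref{lem:g_a_asymptotics}) is needed. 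Using that $V$ evaluated at $x_i=y_i=i$, $z=1$ equals $\prod_{i=0}^{p-1}(2i+1)!^2$ and dividing by the asymptotics of $M_{2n}^{(p)}$ from Lemma~\ref{lem:wm_with_wall_asymptotic_total}, the coefficient bookkeeping gives
\[
	\P\left\{\frac{H_{n,p}+2}{\sqrt n}\le t\right\}=\frac{1}{2^{p^2}\prod_{i=0}^{p-1}(2i+1)!}\sum_{\vec a\ge\vec 0}(-4)^{|\vec a|_1}\det_{0\le i,j<p}\left(\frac{(2i+2j+2)!}{(i+j+1-a_i)!\,(2a_i)!}\right)\Theta_{\vec a}(t)+O\!\left(\frac{1}{nt}\right).
\]

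To reach the closed form I would run the determinant manipulation and the application of the generalised reciprocity relation~(\ref{eq:vartheta_a_identity}) exactly as was done for $\kappa^{(p)}(z)$ in the proof of Theorem~\ref{thm:sth-moment-asymptotics}. By multilinearity the sum over $\vec a$ collapses into $\det_{0\le i,j<p}\big(\sum_{a\ge 0}\frac{(2i+2j+2)!}{(i+j+1-a)!(2a)!}(t^2/\pi^2)^a\vartheta_{2a}(t^2/\pi)\big)$, and~(\ref{eq:vartheta_a_identity}), read with $y=t^2/\pi$, turns each entry into $\vartheta_{2i+2j+2}(\pi/t^2)$ up to a factor; being a product of a row-dependent and a column-dependent term, that factor pulls out of the determinant as $(-1)^p\pi^{p/2}t^{-2p^2-p}$, producing $t^{-2p^2-p}T_p(\pi/t^2)$ and (after $(-1)^p/2^{p^2}=1/(-2)^{p^2}$) the constant $\pi^{p/2}/\big((-2)^{p^2}\prod_{i=0}^{p-1}(2i+1)!\big)$ of~(\ref{eq:centrallimitlaw}). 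I expect the genuine obstacle to lie not in this algebra but in the \emph{uniform} control of the error. The remainder in Lemma~\ref{lem:approx_binom_quot} and the discarded tail must be bounded with constants independent of $t$, and because the Gaussian weight $e^{-t^2m^2}$ decays slowly for small $t$ (so that ever more lattice points contribute and the height scale $t\sqrt n$ approaches the minimal admissible height $2p$), the honest bound degrades from $O(1/n)$ to $O(1/(nt))$. Quantifying this degradation, that is, tracking how the cut-off $n^{\varepsilon}/t$ and the sub-leading coefficients depend on $t$, is the one point requiring work beyond a direct transcription of the proof of Lemma~\ref{lem:wm_with_wall_asymptotic_for_sum}.
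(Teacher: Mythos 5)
Your proposal is correct and takes essentially the same route as the paper's own proof: the paper likewise generalises $M_{2n,h+1}^{(p)}$ to a determinant in auxiliary variables $\vec x,\vec y,z$ with period $h+2=t\sqrt n$ (so the theta-type sums are free of $n$ and no Mellin analysis is needed), uses divisibility by the Vandermonde-type product, extracts the coefficient of $\prod_i x_i^{2i+1}y_i^{2i+1}$, and converts the resulting sums via the reciprocity relation~(\ref{eq:vartheta_a_identity}) into $\vartheta_{2i+2j+2}\left(\pi/t^2\right)$, with the $O\left(1/(nt)\right)$ error arising exactly as you describe. The only cosmetic difference is that the paper applies reciprocity directly to the sums $(2a)!\sum_m\phi_{2a}(mt)e^{-(mt)^2}$ inside the determinant, whereas you first collapse the sum over $\vec a$ by multilinearity as in the proof of Theorem~\ref{thm:sth-moment-asymptotics}; both yield the same constant.
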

\begin{figure}[]
		\begin{center}
			\includegraphics[width=14cm]{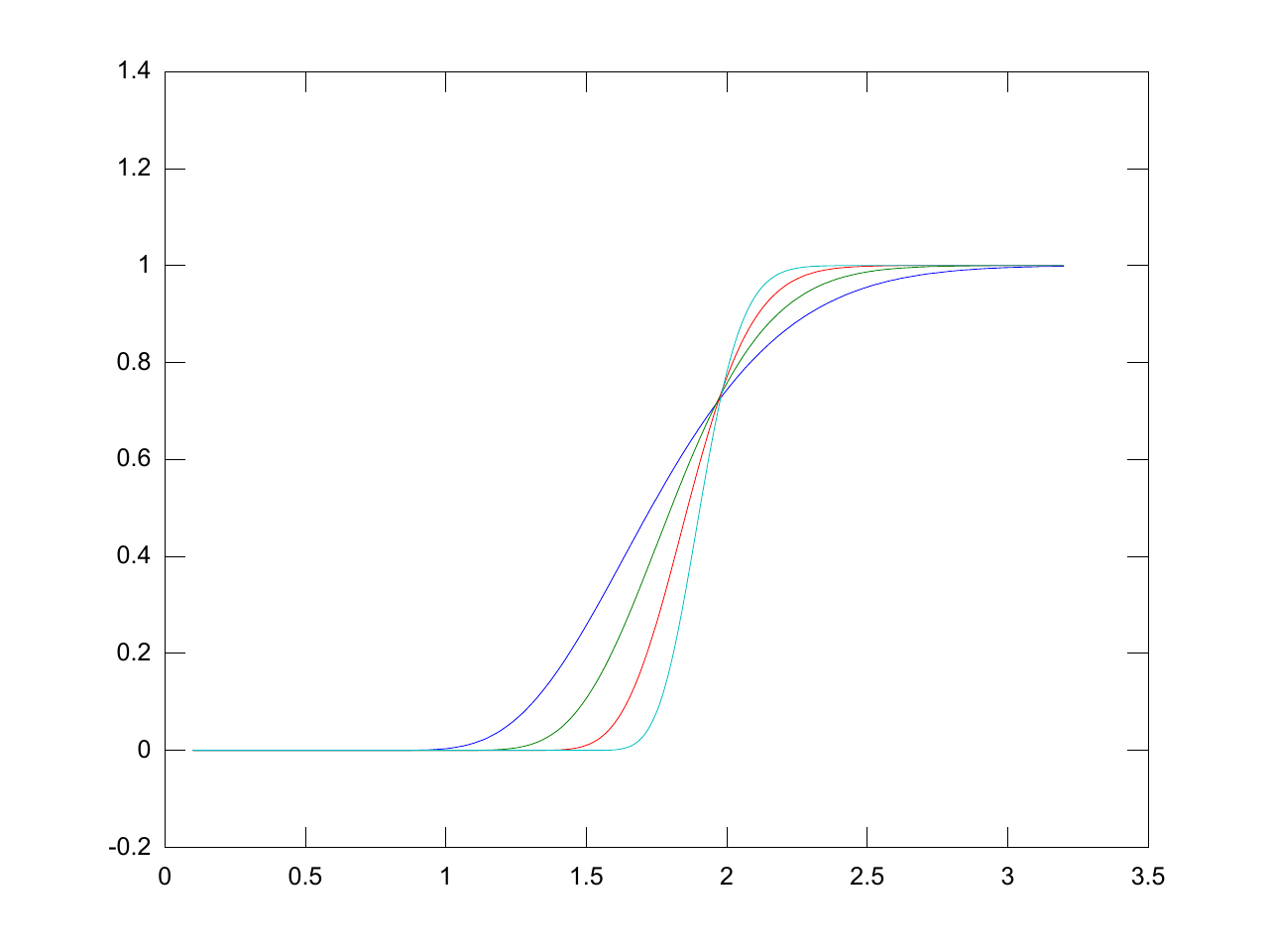}
		\end{center}
		\caption{The plot shows the scaled limiting probability distributions $F_p\left( t\sqrt{p} \right)$ of Theorem~\ref{thm:centrallimitlaw} for $p=1,2,4,8$ for $0\le t\le 3.5$.}
		\label{fig:p->infinity}
		\end{figure}

For the special case $p=1$, Theorem~\ref{thm:centrallimitlaw} reduces to the well known central limit law first proved by 
R{\'e}nyi and Szekeres~\cite{MR0219440}, viz.
\begin{equation}
		\P\left\{ \frac{H_{n,1}}{\sqrt n}\le t \right\} \to
		\sum_{m\in\Z}\left( 1-2(mt)^2\right)e^{-(mt)^2} 
		,\qquad n\to\infty.
		\label{eq:limitlaw_p=1}
\end{equation}
It should be mentioned that this limiting distribution is the distribution function of $\sqrt{2} \max_{0\le x\le 1}e(x)$,
where $e(x)$ denotes the standard Brownian excursion of duration $1$.
For details and references we refer to the survey paper by
Biane, Pitman and Yor~\cite{MR1848256}, in which the authors consider probability
laws related to Brownian motion, Riemann's zeta function and Jacobi's theta functions.

\begin{remark}
	Recall the integral expression for the quantities $\kappa_s^{(p)}$ as given in Theorem~\ref{thm:sth-moment-asymptotics}, viz.
	\[
	\kappa_{s}^{(p)} = \frac{\pi^{s/2}}{2}\int_{0}^{\infty}t^{-1-s/2}
	\left( 1-\frac{t^{p^2+p/2}}{(-\pi)^{p^2}}\frac{T_p(t)}{M_p} \right) dt,\qquad s>0.
	\]
	The change of variables $t\to\pi/t^2$ transforms this integral into
	\[
	\kappa_s^{(p)}=\int_{0}^\infty t^{s-1}\left( 1-f_p(t) \right)dt,
	\]
	where
	\[ 
	f_p(t)=\frac{\pi^{p/2}t^{-2p^2-p}}{(-2)^{p^2}\prod_{i=0}^{p-1}(2i+1)!}
		\det_{0\le i,j<p}\Big( \vartheta_{2i+2j+2}\left( \frac{\pi}{t^2} \right)\Big)
	\]
	is the dominant part of the asymptotics given in Theorem~\ref{thm:centrallimitlaw}.
	It is clear that $f_p(t)$ is a probability distribution function, and furthermore $f_p(t)$ is differentiable.
	Hence, by partial integration, we obtain
	\[ 
	\kappa_s^{(p)}=\int_{0}^\infty t^{s-1}\left( 1-f_p(t) \right)dt = \frac{1}{s}\int_0^\infty t^s f'_p(t)dt.
	\]
	This last integral is simply the $s$-th moment of a random variable with probability distribution $f_p(t)$, precisely as it should be.
\end{remark}
\begin{remark}
	After distribution of the first version of this manuscript on arXiv.org, the asymptotic cumulative distribution function of the random variable ``height'' as given in Theorem~\ref{thm:centrallimitlaw} has been re-derived by two groups.
	Since their expressions differ from the one given by Equation~(\ref{eq:centrallimitlaw}) we want to give some comments on the
	equivalence of these three (more or less) different expressions.
	
	The expression found by  Katori et al.~\cite{katori2} can easily be obtained by an application of the
	reciprocity relation~(\ref{eq:vartheta_a_identity}) to Equation~(\ref{eq:centrallimitlaw}), and therefore, is not essentially different
	from the one given here.

	Schehr et al.~\cite{schehr} expressed the cumulative distribution function of the height as a multiple sum, which can also be easily derived
	from Theorem~\ref{thm:centrallimitlaw}.
	Since $\vartheta_{2a}(t)=\sum_{n=-\infty}^{\infty}(-4\pi^2n^2)^{a}e^{-n^2\pi t}$ we see that the determinant in
	Equation~(\ref{eq:centrallimitlaw}) is equivalent to
	\begin{align*}
		\det_{0\le i,j<p}\Big( \vartheta_{2i+2j+2}\left( \frac{\pi}{t^2} \right)\Big)&=
		\sum_{n_0,\dots,n_{p-1}\in\Z}e^{-\sum_{j=0}^{p-1}(n_j\pi/t)^2}\det_{0\le i,j<p}\left( (-4n_j^2\pi^2)^{i+j+1} \right) \\
		&=(-\pi^2)^{p^2}2^{2p^2+p}
		\sum_{n_0,\dots,n_{p-1}\in\N}\left( \prod_{j=0}^{p-1}n_j^{2j+2} \right)\left( \prod_{0\le i<j<p}(n_j^2-n_i^2) \right)
		e^{-\sum_{j=0}^{p-1}(n_j\pi/t)^2} \\
		&=(-\pi^2)^{p^2}2^{2p^2+p}
		\sum_{1\le n_0<\dots<n_{p-1}}\left( \prod_{0\le i<j<p}(n_j^2-n_i^2) \right)
		\left( \sum_{\sigma\in\mathfrak S_p}\sgn(\sigma)\prod_{j=0}^{p-1}n_{\sigma(j)}^{2j+2}e^{-(n_{\sigma(j)}\pi/t)^2} \right) \\
		&=(-\pi^2)^{p^2}2^{2p^2+p}
		\sum_{1\le n_0<\dots<n_{p-1}}
		\left(  \prod_{0\le i<j<p}(n_j^2-n_i^2)  \right)^2\left( \prod_{j=0}^{p-1}n_j^2e^{-(n_j\pi/t)^2} \right),
 	\end{align*}
	where $\mathfrak S_p$ denotes the set of permutations on the set $\left\{ 0,1,\dots,p-1 \right\}$.
	Here, the second and fourth lines are simple consequences of the Vandermonde determinant formula. For the third line we note that
	the the factor $\sgn(\sigma)$ cancels the sign change of the Vandermonde product due to the rearrangement from the second to the third line.
	Substituting this expression for the determinant in Equation~(\ref{eq:centrallimitlaw}) we arrive at
	\[
	\frac{2^{p^2+p}\pi^{2p^2+p/2}t^{-2p^2-p}}{\prod_{i=0}^{p-1}(2i+1)!}\sum_{1\le n_0<\dots<n_{p-1}}
		\left(  \prod_{0\le i<j<p}(n_j^2-n_i^2)  \right)^2\left( \prod_{j=0}^{p-1}n_j^2e^{-(n_j\pi/t)^2} \right),
	\]
	which is the expression for the cumulative distribution function obtained by Schehr et al~\cite{schehr}.
	It should be noted however, that the derivation in \cite{schehr} is based on physical arguments and path integral techniques and therefore should be considered as being heuristic (at least from a mathematical point of view).
\end{remark}
\begin{remark}
	There is yet another quite interesting representation for the limiting distribution function $F_p(t)$ of Theorem~\ref{thm:centrallimitlaw}.
	Starting from the multiple sum representation of the last remark, where we set $x_j=n_j\pi/(t\sqrt{p})$ we obtain, recalling the Selberg integral evaluation (see, e.g., \cite{MR2129906})
	\[
	\idotsint\limits_{0<x_0<x_1<\dots<x_{p-1}}
	\left( \prod_{0\le i<j<p}(x_j^2-x_i^2) \right)^2\left( \prod_{j=0}^{p-1}x_j^2e^{-x_j^2}dx_j \right) \\
	=
	\frac{\pi^{p/2}}{2^{p^2+p}}\prod_{j=0}^{p-1}(2j+1)!,
	\]
	the representation
	\[
		F_p(t)=
		\frac{
		\left( \frac{\pi }{t\sqrt{p}} \right)^p
		\sum\limits_{\substack{0<x_0<\dots<x_{p-1} \\ x_j\frac{t\sqrt{p}}{\pi}\in\N}}\left( \prod\limits_{0\le i<j<p}(x_j^2-x_i^2) \right)^2\left( \prod\limits_{j=0}^{p-1}x_j^2e^{-px_j^2} \right)
		}{
		\idotsint\limits_{0<x_0<\dots<x_{p-1}}
		\left( \prod\limits_{0\le i<j<p}(x_j^2-x_i^2) \right)^2\left( \prod\limits_{j=0}^{p-1}x_j^2e^{-px_j^2}dx_j \right)
		}
	\]
	Here it should be observed that the multiple sum in the numerator can be interpreted as a Riemann sum approximation to the integral in the denominator.
	\label{rem:F_p(t)-quotient_representation}
\end{remark}

\begin{remark}
	An interesting problem is the behaviour of the limiting distribution $F_p(t)$ of Theorem~\ref{thm:centrallimitlaw} and of the quantities $\kappa_s^{(p)}$ as the number $p$ of paths tends to infinity.
	This question is especially natural from a physical point of view where vicious walkers serve as models for certain gases, the walkers being particles, and one is interested in the behaviour of this gas as the number $p$ of particles tends to infinity.

Based on numerical experiments, Bonichon and Mosbah~\cite{MR2022577} predicted that
\[ \kappa_1^{(p)}\approx\sqrt{1.67p-0.06}\qquad\textrm{as $p\to\infty$.} \]
And in view of Figure~\ref{fig:p->infinity}, one might suspect that $F_p(t\sqrt p)$ should converge to a certain limiting function $F(t)$ as $p\to\infty$, so that (by the change of variables $t\to t/\sqrt{p}$) the quantity $\kappa_s^{(p)}$ should scale like $p^{s/2}$, and further, being bold, one might even guess that this limiting function will be the step function with a jump at $t=2$.
If this were the case, then we would have the asymptotic behaviour
\[
\kappa_s^{(p)} = p^{s/2}\int_0^\infty t^{s-1}\left( 1-F(t\sqrt{p} \right)dt
\sim p^{s/2}\int_0^2t^{s-1}dt=(4p)^{s/2}
\]
as $p\to \infty$.

In a recent work, Forrester et al.~\cite{MR2747559} gave arguments that $F_p(t)$ indeed exhibits this behavior.
Moreover, the authors succeeded to show that the fluctuations around the mean value are typically of order $p^{-1/6}$ and follow the famous Tracy-Widom probability law.
However, from a mathematical point of view, the derivation in \cite{MR2747559} has to be considered highly non-rigorous, as the arguments involve certain saddle point heuristics which are based on some unproved assumptions.
Very recently, Liechty~\cite{Liechty} studied discrete Gaussian orthogonal polynomials and, based on a Riemann-Hilbert approach, derived asymptotic results that can be used to give a complete rigorous proof of the results in \cite{MR2747559}.
%
%
%
%
\end{remark}

\section{Some multidimensional Dirichlet series}
\label{sec:jacobi}
In this section we study the multidimensional Dirichlet series
\[
	Z_{\vec a}(z)
	=\sum_{\vec m\neq\vec 0}\frac{m_0^{a_0}\dots m_{p-1}^{a_{p-1}}}{(m_0^2+\dots+m_{p-1}^2)^z}
	=\sum_{\vec m\neq \vec 0}\frac{\vec m^\vec a}{|\vec m|_2^{2z}},
\]
where $\vec m=(m_0,\dots,m_{p-1})$ ranges over $\Z^p\setminus\left\{ 0 \right\}$,
for $\vec a=(a_0,\dots,a_{p-1})\in\Z^p$, $\vec a\ge \vec 0$. 
Our goal is to establish the analytic continuation of $Z_{\vec a}(z)$ to a meromorphic function
and the determination of its poles. Also, we need information on the growth of $Z_{\vec a}(z)$
as $|z|\to\infty$ in some vertical strip.

It follows from the definition that $Z_{a_0,\dots,a_{p-1}}(z)=Z_{a_{\sigma(0)},\dots,a_{\sigma(p-1)}}(z)$
for every permutation $\sigma\in S_p$.
If $p=1$ then
\[ Z_a(z)=2\left[ \text{$a$ even} \right]\zeta(2z-a), \]
where $[\textrm{Statement}]$ is Iverson's notation, that is
\[
	[\textrm{Statement}] = \begin{cases} 1 & \textrm{if 'Statement' is true,} \\ 0 & \textrm{otherwise.} \end{cases}
\]
If $a_{p-1}$ is odd, the definition shows that $Z_{a_0,\dots,a_{p-2},a_{p-1}}(z)=0$.
Consequently, we may assume that the parameters $a_0,\dots,a_{p-1}$ are even.

The analytic continuation of $Z_{2\vec a}(z)$ is accomplished very much in the spirit of one of Riemann's methods for $\zeta(z)$
(see, e.g., \cite[Section 2.6]{MR882550}).
In fact we have
\begin{align}
	\frac{(2\pi i)^{2|\vec a|_1}}{\pi^z}Z_{2\vec a}(z)\Gamma(z)
	&=
	\int_0^\infty t^{z-1}\left(\left( \prod_{j=0}^{p-1} \vartheta_{2a_j}(t) \right)-\left[\vec a=\vec 0 \right]\right)dt,
	\label{eq:Z_riemann}
\end{align}
where $\vartheta_a(t)=\theta_a(0,it)$ and where
\[
\theta_a(x,y)=\frac{\partial^a}{\partial x^a}\theta(x,y)=\sum_{n=-\infty}^\infty (2\pi in)^ae^{2\pi i(xn+n^2y/2)},
\qquad \Im(y)>0,
\]
is the $a$-th derivative with respect to $x$ of $\theta(x,y)=\sum_n e^{2\pi i(xn+n^2y/2)}$, a variant of one of Jacobi's theta functions.
Here, Equation~(\ref{eq:Z_riemann}) is obtained by substitution of Euler's integral for the gamma function,
viz. $\Gamma(z)=\int_0^\infty t^{z-1}e^{-t}dt$, and
the series definition for $Z_{2\vec a}$ on the left hand side of the equation above followed by
interchanging summation and integration as well as a change of variables in the integral.

We are now going to extract information on the poles of $Z_{2\vec a}(z)$ from the integral~(\ref{eq:Z_riemann}).
This task is accomplished with the help of a generalised reciprocity relation (see Corollary~\ref{cor:vartheta_a_identity}),
which is a consequence of the following two results, stated in Lemma~\ref{lem:theta_derivatives} and
Proposition~\ref{lem:theta_derivative_reciprocity}. 
This relation generalises Jacobi's reciprocity law for $\theta(x,y)$, and is proved following along the lines of the proof
of the reciprocity law in \cite[Section 2.3]{MR1889901}.
\begin{lemma}\label{lem:theta_derivatives}
Let $(f_a(x,y))_{a\ge 0}$ be a sequence of functions which are entire with respect to $x$ for every fixed $y$ with $\Im(y)>0$.
If $(f_a(x,y))_{a\ge 0}$ satisfies the conditions
\renewcommand{\theenumi}{\roman{enumi}}
\begin{enumerate}
	\item \label{lem:enum:theta_derivatives_p1}
		$f_a(x+1,y) = f_a(x,y)$ 
	\item\label{lem:enum:theta_derivatives_p2}
	 	$f_a(x-y,y) = e^{2\pi i(x-y/2)}\sum\limits_{k=0}^a\binom{a}{k}f_k(x,y)$
\end{enumerate}
then we have
\begin{equation}
	\label{eq:theta_derivatives}
	f_a(x,y)=\sum_{k=0}^a\binom{a}{k}\frac{c_0^{(k)}(y)}{(2\pi i)^{a-k}}\theta_{a-k}(x,y),
\end{equation}
where
\[
c_0^{(k)}(y)=\int_0^1 f_k(x,y)dx
\]
is the constant term in the Fourier expansion of $f_k(x,y)$ as a function in $x$.
\end{lemma}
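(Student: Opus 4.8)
The plan is to exploit condition (\ref{lem:enum:theta_derivatives_p1}): since $f_a(\cdot,y)$ is entire and $1$-periodic, setting $q=e^{2\pi ix}$ realises it as a holomorphic function of $q$ on $\C\setminus\{0\}$, which therefore has a Laurent expansion convergent on all of $\C\setminus\{0\}$ and uniformly on compacta. Translated back, this is a globally convergent Fourier expansion
\[
f_a(x,y)=\sum_{n=-\infty}^\infty c_n^{(a)}(y)\,e^{2\pi inx},\qquad c_n^{(a)}(y)=\int_0^1 f_a(x,y)e^{-2\pi inx}\,dx,
\]
for which term-by-term manipulation and comparison of coefficients are legitimate, and in which $c_0^{(k)}(y)$ is precisely the quantity appearing in the statement. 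The strategy is then to turn the quasi-periodicity relation (\ref{lem:enum:theta_derivatives_p2}) into a recursion for these Fourier coefficients, solve that recursion in closed form, and recognise the reassembled series as the right-hand side of (\ref{eq:theta_derivatives}).

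First I would substitute the expansion into both sides of (\ref{lem:enum:theta_derivatives_p2}). The left-hand side becomes $\sum_n c_n^{(a)}(y)e^{-2\pi iny}e^{2\pi inx}$, while on the right the prefactor $e^{2\pi i(x-y/2)}$ shifts the summation index, producing $e^{-\pi iy}\sum_n\bigl(\sum_{k=0}^a\binom ak c_{n-1}^{(k)}(y)\bigr)e^{2\pi inx}$. Comparing the coefficient of $e^{2\pi inx}$ yields the recursion
\[
c_n^{(a)}(y)=e^{\pi i(2n-1)y}\sum_{k=0}^a\binom ak c_{n-1}^{(k)}(y).
\]
The point to stress is that this recursion couples all orders $k\le a$; it is this coupling that makes the lemma a genuine generalisation of the classical reciprocity law, which corresponds to the decoupled case $a=0$. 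Together with the data $\{c_0^{(k)}(y)\}_{k\ge0}$ the recursion determines every coefficient uniquely: running it upward gives $c_n^{(a)}$ from the $c_{n-1}^{(\cdot)}$, and its triangular structure in $a$ lets one invert it to descend to negative $n$ as well.

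The computational core is to solve this recursion. I would guess, and verify by induction on $n$, the closed form
\[
c_n^{(a)}(y)=e^{\pi in^2y}\sum_{k=0}^a\binom ak n^{a-k}c_0^{(k)}(y).
\]
The base case $n=0$ is immediate from $0^{a-k}=[k=a]$, and the induction step rests on three elementary identities: the exponent collapse $(2n-1)+(n-1)^2=n^2$, the subset-of-a-subset identity $\binom ak\binom kj=\binom aj\binom{a-j}{k-j}$, and the binomial theorem $\sum_{\ell}\binom{a-j}{\ell}(n-1)^\ell=n^{a-j}$. Since the candidate satisfies both the recursion and the correct value at $n=0$, uniqueness forces it to equal $c_n^{(a)}(y)$. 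Reassembling the Fourier series and interchanging the finite inner with the absolutely convergent outer sum gives $f_a(x,y)=\sum_{k=0}^a\binom ak c_0^{(k)}(y)\sum_n n^{a-k}e^{2\pi inx+\pi in^2y}$, and comparison with $\theta_{a-k}(x,y)=(2\pi i)^{a-k}\sum_n n^{a-k}e^{2\pi inx+\pi in^2y}$ identifies the inner sum as $(2\pi i)^{-(a-k)}\theta_{a-k}(x,y)$, which is exactly (\ref{eq:theta_derivatives}).

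I expect the main obstacle to be this middle step: discovering the closed form for the coupled recursion and carrying out the binomial bookkeeping in the induction cleanly. The convergence and coefficient-matching steps are routine given $\Im(y)>0$ (which makes $e^{\pi in^2y}$ decay so the theta series converge) and the entirety and periodicity of the $f_a$; the only point needing a little care is justifying that the recursion together with the data at $n=0$ pins the coefficients down uniquely, so that a verified guess is \emph{the} solution rather than merely \emph{a} solution.
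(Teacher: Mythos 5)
Your proof is correct and is essentially the paper's own argument: expand $f_a$ in a Fourier series in $x$, convert condition (ii) into a coupled recursion for the coefficients, solve that recursion in closed form via the same binomial identities, and reassemble the series as a combination of the $\theta_{a-k}$. The only difference is cosmetic — the paper expands in the basis $e^{2\pi i(xn+n^2y/2)}$, so its recursion $c_{n+1}^{(a)}(y)=\sum_{k=0}^a\binom{a}{k}c_n^{(k)}(y)$ carries no exponential factor, whereas you use the plain basis $e^{2\pi inx}$ and the Gaussian factor $e^{\pi in^2y}$ travels with the coefficients; if anything, you supply details (the Laurent-series justification of global convergence, the inductive verification of the closed form, and the uniqueness/invertibility argument covering negative $n$) that the paper leaves implicit.
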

\begin{proof}
		Condition~(\ref{lem:enum:theta_derivatives_p1}) implies the convergent Fourier expansion
		($f_a(x,y)$ being understood as a function of $x$)
\[
	f_a(x,y)=\sum_n c_n^{(a)}(y)e^{2\pi i(xn+n^2y/2)}
\]
for $a\ge 0$ which shows that
\[
	e^{-2\pi i(x-y/2)}f_a(x-y,y)=\sum_nc_{n+1}^{(a)}(y)e^{2\pi i(xn+n^2y/2)}.
\]
Now, this last equation and Condition~(\ref{lem:enum:theta_derivatives_p2}) together imply the recursion
\[
	c_{n+1}^{(a)}(y)=\sum_{k=0}^a\binom{a}{k}c_n^{(k)}(y),
\]
which yields
\[
	c_n^{(a)}(y)=\sum_{k=0}^a\binom{a}{k}n^{a-k}c_0^{(k)}(y).
\]
This proves the lemma.
\end{proof}

\begin{proposition}
	We have
	\begin{align}
		\sum_{k=0}^{\lfloor\frac{a}{2}\rfloor}\binom{a}{2k}\frac{(2k)!}{k!}\pi^{k}\left(\frac{y}{i}\right)^{a-k+1/2}\theta_{a-2k}(x,y)
		&= \label{eq:theta_derivative_reciprocity}
		e^{-i\pi x^2/y}\sum_{k=0}^a\binom{a}{k}(-x)^ki^{k-a}(2\pi)^k\theta_{a-k}\left(\frac{x}{y},-\frac{1}{y}\right). 
	\end{align}
	\label{lem:theta_derivative_reciprocity}
\end{proposition}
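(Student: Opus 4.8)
The plan is to deduce the whole family of identities, one for each $a$, from the single classical case $a=0$ by a generating-function substitution. The case $a=0$ of~(\ref{eq:theta_derivative_reciprocity}) is precisely Jacobi's reciprocity law
\[
	(y/i)^{1/2}\theta(x,y)=e^{-i\pi x^2/y}\,\theta\!\left(\tfrac{x}{y},-\tfrac1y\right),
\]
which holds for all $x\in\C$ and $\Im(y)>0$, and which I would take as the starting point (it is classical, and can be established by Poisson summation as in \cite{MR1889901}). Since both sides are entire in $x$, I would replace $x$ by $x+\xi$ and read off the statement for general $a$ by comparing Taylor coefficients in the auxiliary variable $\xi$. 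This route is independent of Lemma~\ref{lem:theta_derivatives}, which feeds into the corollary rather than into this proposition.

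The point is that $\theta_a=\partial_x^a\theta$, so the exponential generating function of the left-hand side is a mere shift, $\theta(x+\xi,y)=\sum_{a\ge0}\frac{\xi^a}{a!}\theta_a(x,y)$, and similarly $\theta((x+\xi)/y,-1/y)=\sum_{b\ge0}\frac{\xi^b}{y^bb!}\theta_b(x/y,-1/y)$. Expanding the Gaussian prefactor as $e^{-i\pi(x+\xi)^2/y}=e^{-i\pi x^2/y}e^{-2\pi ix\xi/y}e^{-i\pi\xi^2/y}$, the only thing that stands in the way of a clean comparison is the cross term $e^{-i\pi\xi^2/y}$, which would otherwise generate Hermite polynomials. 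The decisive move is therefore to multiply the shifted identity through by $e^{i\pi\xi^2/y}$: on the right this cancels the $\xi^2$-Gaussian and leaves the product of two ordinary power series, while on the left it inserts the factor $e^{i\pi\xi^2/y}=\sum_{l\ge0}\frac{(i\pi/y)^l}{l!}\xi^{2l}$, whose purely even powers are exactly what produces the gapped sum over $\theta_{a-2k}$.

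Finally I would extract the coefficient of $\xi^a$ on each side and multiply the resulting identity through by $(y/i)^aa!$. On the right, collecting the contributions with $b+c=a$ from the theta series and from $e^{-2\pi ix\xi/y}$, and using $(-2\pi ix)^k/i^a=(-x)^k(2\pi)^ki^{k-a}$, reproduces $e^{-i\pi x^2/y}\sum_k\binom{a}{k}(-x)^ki^{k-a}(2\pi)^k\theta_{a-k}(x/y,-1/y)$; on the left, collecting the terms with $2k+m=a$ (so $m=a-2k$) and using $\frac{a!}{k!(a-2k)!}=\binom{a}{2k}\frac{(2k)!}{k!}$ together with $(y/i)^{a+1/2}(i\pi/y)^k=\pi^k(y/i)^{a-k+1/2}$ reproduces $\sum_k\binom{a}{2k}\frac{(2k)!}{k!}\pi^k(y/i)^{a-k+1/2}\theta_{a-2k}(x,y)$, which is the left-hand side of~(\ref{eq:theta_derivative_reciprocity}). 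The hard part is not conceptual but careful bookkeeping: keeping the principal branch of $(y/i)^{1/2}$ consistent across all the powers, getting the arithmetic of the $i$- and $\pi$-factors exactly right, and (routinely) justifying the term-by-term comparison, which is legitimate since every series in $\xi$ here is entire with locally uniform convergence. I note that a direct Poisson-summation attack on $\theta_a$ would also work but is more painful: the Fourier transform of a monomial times a Gaussian entangles $x$ with the summation index and forces one to disentangle Hermite polynomials by hand, which is exactly the complication that multiplication by $e^{i\pi\xi^2/y}$ sidesteps.
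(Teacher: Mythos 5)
Your proof is correct, but it takes a genuinely different route from the paper. The paper proves the identity ab initio: it applies Lemma~\ref{lem:theta_derivatives} (the Fourier-coefficient characterisation of sequences satisfying the quasi-periodicity conditions) to the auxiliary functions $f_a(x,y)=\sum_n\bigl(-\tfrac{x+n}{y}\bigr)^ae^{-i\pi(x+n)^2/y}$, evaluates the constant Fourier terms $c_0^{(a)}(y)$ by a Gaussian integral and an integration-by-parts recursion, and then equates the resulting representation of $f_a$ with the one obtained by direct binomial expansion; in that treatment the classical Jacobi law is a \emph{consequence} (the case $a=0$), not an input. You reverse the logical order: you import the classical $a=0$ law (legitimately, since it has independent proofs via Poisson summation, so there is no circularity), shift $x\mapsto x+\xi$, and extract Taylor coefficients in $\xi$, with the multiplication by $e^{i\pi\xi^2/y}$ being exactly the right move to cancel the Gaussian cross term on one side while producing, on the other side, the even-powers series that accounts for the gapped sum over $\theta_{a-2k}$. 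I checked your bookkeeping: $\tfrac{a!}{k!(a-2k)!}=\binom{a}{2k}\tfrac{(2k)!}{k!}$, $\left(\tfrac{y}{i}\right)^{a+1/2}\left(\tfrac{i\pi}{y}\right)^k=\pi^k\left(\tfrac{y}{i}\right)^{a-k+1/2}$, and $(-2\pi ix)^k i^{-a}=(-x)^k(2\pi)^k i^{k-a}$ are all correct, and the branch issue is benign because the only fractional power is $(y/i)^{1/2}$ and $\Re(y/i)>0$ when $\Im(y)>0$, so $(y/i)^{a+1/2}=(y/i)^a(y/i)^{1/2}$ with principal branches. What your approach buys is brevity and the elimination of both Lemma~\ref{lem:theta_derivatives} and the Gaussian moment recursion; what the paper's approach buys is self-containment (it proves the classical law rather than citing it) and a reusable structural lemma whose Fourier-expansion technique mirrors the standard proof of the reciprocity law it generalises.
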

\begin{proof}
	We prove the claim by applying Lemma~\ref{lem:theta_derivatives} to the functions
	\[
		f_a(x,y)=\sum_n\left( -\frac{x+n}{y} \right)^ae^{-i\pi (x+n)^2/y},\qquad a\ge 0.
	\]
	Condition~(\ref{lem:enum:theta_derivatives_p1}) of Lemma~\ref{lem:theta_derivatives} is clearly satisfied by $f_a(x,y)$.
	For Condition~(\ref{lem:enum:theta_derivatives_p2}) we calculate
	\begin{equation*}
			f_a(x-y,y) = \sum_n\left( 1-\frac{x+n}{y} \right)^ae^{-i\pi (x+n-y)^2/y}
			= e^{2\pi i(x-y/2)}\sum_{k=0}^a\binom{a}{k}\sum_n\left( -\frac{x+n}{y} \right)^ke^{-i\pi(x+n)^2/y}.
 	\end{equation*}
	It remains to determine the coefficients $c_0^{(a)}(y)$ of Lemma~\ref{lem:theta_derivatives}. Short calculations show that
	\[
	c_0^{(a)}(y)=\int_0^1f_a(x,y)dx=2\left[ \textrm{$a$ even} \right]\int_0^\infty\left( \frac{x}{y} \right)^ae^{-i\pi x^2/y}dx.
	\]
	In particular we have for $a=0$
	\[
		c_0^{(0)}(y)=\int_{-\infty}^\infty e^{-i\pi x^2/y}dx=\sqrt \frac{y}{i}.
	\]
	Note that the evaluation of the integral above is true for $y=it$ for some $t>0$ and analytic continuation then proves the
	correctness for general $y$ with $\Im(y) >0$.
	If $a>0$ then integration by parts yields the recursion
	\begin{equation*}
			c_0^{(2a)}(y)=
			2\int_0^\infty\left( \frac{x}{y} \right)^{2a}e^{-i\pi x^2/y}dx=
			\frac{2a-1}{i\pi y}\int_0^\infty\left( \frac{x}{y} \right)^{2a-2}e^{-i\pi x^2/y}dx
			= \frac{2a-1}{2i\pi y}c_0^{(2a-2)}(y),
	\end{equation*}
	and we obtain
	\[
		c_0^{(a)}(y)=
		\begin{cases}
				0 & \text{if $a$ is odd} \\
				\frac{a!}{(4\pi i y)^{a/2}(a/2)!}\sqrt\frac{y}{i} & \text{if $a$ is even.}
		\end{cases}
	\]
	Hence, by Lemma~\ref{lem:theta_derivatives} we have
	\[
	f_a(x,y)=\sum_{k=0}^{\lfloor\frac{a}{2}\rfloor}\binom{a}{2k}\frac{(2k)!}{k!}\frac{\pi^k}{(2\pi i)^a}\left( \frac{y}{i} \right)^{-k+1/2}\theta_{a-2k}(x,y).
	\]
	On the other hand, expanding the binomial term shows that
	\begin{align*}
		f_a(x,y)&= (-y)^{-a}e^{-i\pi x^2/y}\sum_{k=0}^a\binom{a}{k}x^k\sum_n n^{a-k}e^{-2\pi i(xn+n^2/y)}
		\\
		&=y^{-a}e^{-i\pi x^2/y}\sum_{k=0}^a\binom{a}{k}(-x)^k(2\pi i)^{k-a}\theta_{a-k}\left( \frac{x}{y},-\frac{1}{y} \right).
	\end{align*}
	The last two representations for $f_a(x,y)$ prove the lemma.
\end{proof}

Putting $a=0$ in Equation~(\ref{eq:theta_derivative_reciprocity}), we obtain the reciprocity law
for Jacobi's theta functions in the form
\begin{align*}
		\sqrt{\frac{y}{i}}\theta(x,y) &= e^{-i\pi x^2/y}\theta\left( \frac{x}{y},-\frac{1}{y} \right).
\end{align*}
\begin{corollary}
	The functions $\vartheta_a(y)=\theta_a(0,iy)$, $a\ge 0$, satisfy the relation
	\begin{equation}
		\vartheta_a(y)
		=
		i^a\sum_{k=0}^{\lfloor\frac{a}{2}\rfloor}\binom{a}{2k}\frac{(2k)!}{k!}\pi^{k}\left( \frac{1}{y} \right)^{a-k+1/2}\vartheta_{a-2k}\left( \frac{1}{y} \right),
		\qquad y>0.
		\label{eq:vartheta_a_identity}
	\end{equation}
	\label{cor:vartheta_a_identity}
\end{corollary}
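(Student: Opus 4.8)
The plan is to derive the corollary directly from the reciprocity relation~(\ref{eq:theta_derivative_reciprocity}) by specialising it to $x=0$ and then converting the resulting identity for $\theta_a(0,\cdot)$ into one for $\vartheta_a$ via a change of variables. Since the functions $f_a$ of Proposition~\ref{lem:theta_derivative_reciprocity}, and hence the theta derivatives $\theta_a(x,y)$, are entire in $x$, the substitution $x=0$ is legitimate as a plain evaluation rather than a limiting argument.

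First I would put $x=0$ on both sides of~(\ref{eq:theta_derivative_reciprocity}). On the right-hand side the factor $(-x)^k$ annihilates every summand with $k\ge 1$, while $e^{-i\pi x^2/y}$ reduces to $1$; only the $k=0$ term survives, leaving $i^{-a}\theta_a(0,-1/y)$. The left-hand side becomes the sum over $k$ of $\binom{a}{2k}\frac{(2k)!}{k!}\pi^k\left(y/i\right)^{a-k+1/2}\theta_{a-2k}(0,y)$, so that
\[
	\sum_{k=0}^{\lfloor a/2\rfloor}\binom{a}{2k}\frac{(2k)!}{k!}\pi^k\Bigl(\frac{y}{i}\Bigr)^{a-k+1/2}\theta_{a-2k}(0,y)
	=i^{-a}\theta_a\Bigl(0,-\frac{1}{y}\Bigr).
\]
Next I would restrict to $y=it$ with $t>0$ in order to pass to $\vartheta$. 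Under this substitution $y/i=t$ and $\theta_{a-2k}(0,it)=\vartheta_{a-2k}(t)$ by definition, while the elementary computation $-1/(it)=i/t$ turns $\theta_a(0,-1/y)$ into $\theta_a(0,i/t)=\vartheta_a(1/t)$. Multiplying through by $i^a$ and finally relabelling $t\mapsto 1/y$ (so that $1/t=y$ and $y>0$) produces exactly~(\ref{eq:vartheta_a_identity}).

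There is no genuine obstacle here; the argument is a clean specialisation of Proposition~\ref{lem:theta_derivative_reciprocity}. The only points requiring care are the bookkeeping of the change of variables — in particular the identity $-1/(it)=i/t$, which transports the modulus from $-1/y$ to $i/t$ — and the observation that it is precisely the vanishing of the $k\ge 1$ terms on the right of~(\ref{eq:theta_derivative_reciprocity}) that collapses the double family of theta derivatives down to the single term $\vartheta_a(1/t)$, thereby yielding the stated closed form.
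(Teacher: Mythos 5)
Your proof is correct and is essentially the paper's own argument: the paper likewise obtains Corollary~\ref{cor:vartheta_a_identity} from Equation~(\ref{eq:theta_derivative_reciprocity}) by setting $x=0$ (so that the factor $(-x)^k$ kills all terms with $k\ge 1$) and substituting $y\mapsto i/y$, which is exactly your two-step substitution $y=it$ followed by the relabelling $t\mapsto 1/y$. The bookkeeping, including $-1/(it)=i/t$ and the final multiplication by $i^a$, checks out.
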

\begin{proof}
The corollary follows from Equation~(\ref{eq:theta_derivative_reciprocity}) upon setting $x=0$ and replacing
$y$ by $i/y$.
\end{proof}

We can now prove the main result of this section.
\begin{lemma}
		\label{lem:Z_continuation}
		The function $Z_{2\vec a}(z)$ can be analytically continued to a meromorphic function having a single pole of 
		order $1$ at $z=\frac{p}{2}+|\vec a|_1$ with residue
		\begin{align}
			\Res\limits_{z=\frac{p}{2}+|\vec a|_1} Z_{2\vec a}(z)
			&= 
			\frac{\pi^{p/2}}{\Gamma\left( \frac{p}{2}+|\vec a|_1 \right)}
			\left( \prod_{i=0}^{p-1}\frac{(2a_i)!}{4^{a_i}a_i!} \right).
			\label{eq:Z_continuation}
		\end{align}
		Furthermore, we have the representation
		\begin{multline*}
				Z_{2\vec a}(z)
				=
				-\frac{[\vec a=\vec 0]\pi^{z}}{\Gamma(z+1)}
				+\frac{\pi^{z-|\vec a|_1}}{\Gamma(z)} 
				 \frac{\prod_{i=0}^{p-1}\frac{(2a_i)!}{4^{a_i}a_i!}}{z-\frac{p}{2}-|\vec a|_1}
				+\frac{\pi^{z-2|\vec a|_1}}{(-4)^{|\vec a|_1}\Gamma(z)}
				 \int_1^\infty t^{z-1}\left(\left( \prod_{j=0}^{p-1} \vartheta_{2a_j}(t)\right)
				 	-\left[ \vec a=\vec 0 \right]\right)dt
				\\ +\frac{\pi^{z-2|\vec a|_1}}{(-4)^{|\vec a|_1}\Gamma(z)}\int_1^\infty t^{-z-1} \left( \left(\prod_{j=0}^{p-1} \vartheta_{2a_j}\left(\frac{1}{t}\right)\right)
				-(-\pi)^{|\vec a|_1}\left( \prod_{i=0}^{p-1}\frac{(2a_i)!}{a_i!} \right)t^{p/2+|\vec a|_1} \right)dt,
		\end{multline*}
		where the two integrals above define entire functions with respect to $z$.
		For any non-negative integer $k$ we have
		\[
		Z_{2\vec a}(-k) = \begin{cases} -1 & \textrm{if $\vec a=\vec 0$ and $k=0$,} \\ 0 & \textrm{otherwise.}\end{cases}
		\]
\end{lemma}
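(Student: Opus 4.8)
The plan is to follow Riemann's recipe, starting from the Mellin--type integral in Equation~(\ref{eq:Z_riemann}). Solving that identity for $Z_{2\vec a}(z)$ and using $(2\pi i)^{2|\vec a|_1}=(-4)^{|\vec a|_1}\pi^{2|\vec a|_1}$ yields
\[
Z_{2\vec a}(z)=\frac{\pi^{z-2|\vec a|_1}}{(-4)^{|\vec a|_1}\Gamma(z)}\int_0^\infty t^{z-1}\Bigl({\textstyle\prod_{j}\vartheta_{2a_j}(t)}-[\vec a=\vec 0]\Bigr)\,dt,
\]
valid for $\Re z$ large. First I would split the integral at $t=1$. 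On $(1,\infty)$ the integrand decays exponentially, since the $n=0$ Fourier mode of $\vartheta_{2a_j}$ vanishes whenever $a_j>0$, while in the remaining case $\vec a=\vec 0$ the subtraction of $[\vec a=\vec 0]$ removes the surviving constant. Hence $\int_1^\infty$ already defines an entire function of $z$; this is the third integral in the asserted representation.

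The heart of the matter is the piece over $(0,1)$. After the substitution $t\mapsto 1/t$ it becomes $\int_1^\infty t^{-z-1}\prod_j\vartheta_{2a_j}(1/t)\,dt$, so I must control the growth of $\prod_j\vartheta_{2a_j}(1/t)$ as $t\to\infty$. This is exactly where the reciprocity relation does its work: applying Corollary~\ref{cor:vartheta_a_identity} (Equation~(\ref{eq:vartheta_a_identity})) with $y=1/t$ rewrites each factor as $(-1)^{a_j}\sum_{k}\binom{2a_j}{2k}\tfrac{(2k)!}{k!}\pi^{k}\,t^{2a_j-k+1/2}\,\vartheta_{2a_j-2k}(t)$. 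In the resulting product every multi-index other than the diagonal one (all $k_j=a_j$) carries at least one factor $\vartheta_{2a_j-2k_j}(t)$ of positive order, which decays exponentially and annihilates all polynomial growth; the diagonal term alone contributes the pure power $(-\pi)^{|\vec a|_1}\bigl(\prod_j\tfrac{(2a_j)!}{a_j!}\bigr)t^{p/2+|\vec a|_1}$, up to the exponentially small correction coming from $\vartheta_0(t)=1+O(e^{-\pi t})$. Subtracting precisely this power leaves an integrand that decays exponentially, so $\int_1^\infty t^{-z-1}\bigl(\prod_j\vartheta_{2a_j}(1/t)-(-\pi)^{|\vec a|_1}(\prod_j\tfrac{(2a_j)!}{a_j!})t^{p/2+|\vec a|_1}\bigr)dt$ is entire; this is the fourth integral.

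It then remains to reintegrate the subtracted terms explicitly. For $\Re z$ large, $\int_1^\infty t^{-z-1}t^{p/2+|\vec a|_1}\,dt=\tfrac{1}{z-p/2-|\vec a|_1}$, and when $\vec a=\vec 0$ the Iverson subtraction contributes in addition $-\int_1^\infty t^{-z-1}\,dt=-\tfrac1z$. Multiplying by the prefactor and simplifying, via $\pi^{|\vec a|_1}4^{-|\vec a|_1}\prod_j\tfrac{(2a_j)!}{a_j!}=\pi^{|\vec a|_1}\prod_j\tfrac{(2a_j)!}{4^{a_j}a_j!}$ and $z\Gamma(z)=\Gamma(z+1)$, turns the first contribution into the isolated meromorphic term $\tfrac{\pi^{z-|\vec a|_1}}{\Gamma(z)}(\prod_j\tfrac{(2a_j)!}{4^{a_j}a_j!})\tfrac{1}{z-p/2-|\vec a|_1}$ and the second into $-[\vec a=\vec 0]\pi^z/\Gamma(z+1)$. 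The resulting identity of meromorphic functions, established for $\Re z>\tfrac p2+|\vec a|_1$, persists for all $z$ by analytic continuation, giving the stated representation. Since both integrals are entire and $1/\Gamma(z)$ is entire, the only singularity of $Z_{2\vec a}(z)$ is that of the isolated term, which is a simple pole at $z=\tfrac p2+|\vec a|_1$; evaluating its numerator there yields exactly the residue in Equation~(\ref{eq:Z_continuation}).

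Finally, for the special values $z=-k$ with $k\ge 0$: the pole term is finite there because $-k\neq\tfrac p2+|\vec a|_1$, so all three terms carrying the factor $1/\Gamma(z)$ vanish, owing to $1/\Gamma(-k)=0$. Only $-[\vec a=\vec 0]\pi^z/\Gamma(z+1)$ survives, and it equals $-[\vec a=\vec 0]$ at $k=0$ and $0$ for $k\ge1$ since $1/\Gamma(1-k)=0$, which is precisely the claimed value. I expect the sole genuine obstacle to be the growth analysis of the second paragraph---pinning down that only the diagonal multi-index contributes polynomially and that the remainder after subtracting the single power $t^{p/2+|\vec a|_1}$ is exponentially small; everything else is careful bookkeeping with the gamma function.
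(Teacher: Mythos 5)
Your proposal is correct and follows essentially the same route as the paper's proof: both start from Equation~(\ref{eq:Z_riemann}), split the integral at $t=1$, observe the $[1,\infty)$ piece is entire, and use the reciprocity relation of Corollary~\ref{cor:vartheta_a_identity} to isolate the diagonal term $(-\pi)^{|\vec a|_1}\bigl(\prod_j\frac{(2a_j)!}{a_j!}\bigr)t^{p/2+|\vec a|_1}$, whose explicit integration produces the simple pole, the residue, and the special values. The only cosmetic difference is that you substitute $t\mapsto 1/t$ before applying reciprocity, whereas the paper applies reciprocity on $[0,1]$ directly and leaves the substitution implicit in the final representation.
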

\begin{proof}
	Consider again Equation~(\ref{eq:Z_riemann}), viz.
	\begin{align*}
		\frac{(2\pi i)^{2|\vec a|_1}}{\pi^z}Z_{2\vec a}(z)\Gamma(z)
		&=
		\int_0^\infty t^{z-1}\left(\left( \prod_{j=0}^{p-1} \vartheta_{2a_j}(t) \right)-\left[\vec a=\vec 0 \right]\right)dt.
	\end{align*}
	We split the integral above into two parts, one over $[0,1]$ and one over $[1,\infty)$. The second integral
	is seen to define an entire function with respect to $z$. We consider the first integral
	\[
	\int_0^1 t^{z-1}\left(\left( \prod_{j=0}^{p-1} \vartheta_{2a_j}(t) \right)-\left[\vec a=\vec 0 \right] \right)dt
	=
	-\frac{[\vec a=\vec 0]}{z}+\int_0^1 t^{z-1}\left( \prod_{j=0}^{p-1} \vartheta_{2a_j}(t) \right)dt.
	\]
	By virtue of (\ref{eq:vartheta_a_identity}) we obtain
	\begin{multline*}
		\left( \prod_{j=0}^{p-1} \vartheta_{2a_j}(t) \right)
		-(-\pi)^{|\vec a|_1}\left( \prod_{j=0}^{p-1}\frac{(2a_j)!}{a_j!} \right)t^{-p/2-|\vec a|_1}
		\\ =(-1)^{|\vec a|_1}\left( \sum_{\substack{\vec 0\le\vec k\le \vec a \\ \vec k\neq \vec a}}
		\left( \prod_{j=0}^{p-1}
						\binom{2a_j}{2k_j}
						\frac{(2k_j)!}{k_j!}
						\pi^{k_j}
						t^{k_j-2a_j-1/2}
						\vartheta_{2a_j-2k_j}\left( \frac{1}{t}
					\right)
			\right)
		\right)
		\\ +(-\pi)^{|\vec a|_1}\left( \prod_{j=0}^{p-1}\frac{(2a_j)!}{a_j!} \right)t^{-p/2-|\vec a|_1}
		\left( \vartheta\left( \frac{1}{t} \right)^p-1  \right).
	\end{multline*}

	Now, since for $\vec a\neq\vec 0$ the integrals
	\[
	\int_0^1t^{z-1}\left( \vartheta\left( \frac{1}{t} \right)^p-1  \right)dt
	\qquad\textrm{and}\qquad
	\int_0^1t^{z-1}\left( \prod_{j=1}^p\vartheta_{2a_j}\left(\frac{1}{t}\right) \right)dt
	\]
	define entire functions with respect to $z$ we see that
	\[
	\int_0^1 t^{z-1} \left( \left(\prod_{j=1}^p \vartheta_{2a_j}(t)\right)
	-(-\pi)^{|\vec a|_1}\left( \prod_{j=0}^{p-1}\frac{(2a_j)!}{a_j!} \right)t^{-p/2-|\vec a|_1}\right)dt
	\]
	defines an entire function with respect to  $z$, too.

	Combining all the parts and noting that
	\[
		(-\pi)^{|\vec a|_1}\left( \prod_{j=0}^{p-1}\frac{(2a_j)!}{a_j!} \right)\int_0^1 t^{z-1-p/2-|\vec a|_1}dt	
		=\frac{(-\pi)^{|\vec a|_1}\prod_{j=0}^{p-1}\frac{(2a_j)!}{a_j!}}{z-\frac{p}{2}-|\vec a|_1}
	\]
	we obtain the representation for $Z_{2\vec a}$ claimed in the lemma.
	The evaluations at the non-positive integers immediately follow from this representation.
\end{proof}

We close this section with a result on the growth of $Z_{2\vec a}(\sigma+it)$ as $|t|\to\infty$.
\begin{lemma}
		For $\sigma\in\R$ fixed we have the estimate
		\begin{equation}
				Z_{2\vec a}(\sigma+it)=O\left( e^{\varepsilon|t|} \right),\qquad |t|\to\infty,
				\label{eq:Z_vertical_growth}
		\end{equation}
		for any $\varepsilon>0$.
		\label{lem:Z_vertical_growth}
\end{lemma}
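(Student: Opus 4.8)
The plan is to feed the explicit representation of Lemma~\ref{lem:Z_continuation} into a Phragm\'en--Lindel\"of (convexity) argument. The key observation is that in that representation every term carries the factor $1/\Gamma(z)$ or $1/\Gamma(z+1)$, and that this is the \emph{only} source of exponential growth in the imaginary direction; since the growth rate of $1/\Gamma$ on a vertical line does not depend on $\Re z$, a convexity argument in which the left edge of the strip is pushed to $-\infty$ will wash this rate out entirely. Throughout, I write $z=\sigma+it$ as in the statement and reserve $u$ for the variable of integration in Lemma~\ref{lem:Z_continuation}.

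First I would record a crude but uniform bound. Fix reals $\sigma_1<\sigma_2$ and let $\sigma\in[\sigma_1,\sigma_2]$. In the representation both integrands decay exponentially as $u\to+\infty$: for the first integral this is the exponential decay of $\prod_{j}\vartheta_{2a_j}(u)-[\vec a=\vec0]$, and for the second it is the exponential decay that remains after subtracting the main term $\propto u^{p/2+|\vec a|_1}$ which the reciprocity relation~(\ref{eq:vartheta_a_identity}) predicts for $\prod_j\vartheta_{2a_j}(1/u)$. Since $|u^{z-1}|=u^{\sigma-1}\le u^{\sigma_2-1}$ and $|u^{-z-1}|=u^{-\sigma-1}\le u^{-\sigma_1-1}$ for $u\ge1$, both integrals are bounded in absolute value uniformly in $t$ and in $\sigma\in[\sigma_1,\sigma_2]$. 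The accompanying factors of each term are either uniformly bounded or $O(|t|^{-1})$, so, using Stirling's formula in the form $1/|\Gamma(\sigma+it)|=O\!\bigl(|t|^{1/2-\sigma}e^{\pi|t|/2}\bigr)$, I obtain
\[
	Z_{2\vec a}(\sigma+it)=O\!\bigl(|t|^{M}e^{\pi|t|/2}\bigr),\qquad |t|\to\infty,
\]
uniformly for $\sigma\in[\sigma_1,\sigma_2]$, for some $M=M(\sigma_1,\sigma_2)$. Consequently $F(z):=\bigl(z-\tfrac p2-|\vec a|_1\bigr)Z_{2\vec a}(z)$, which removes the single pole, is holomorphic and of finite exponential type in the strip, with indicator $h_F(\sigma):=\limsup_{|t|\to\infty}|t|^{-1}\log|F(\sigma+it)|\le\pi/2$.

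Next I would pin down the two edges. Choosing $\sigma_2>\tfrac p2+|\vec a|_1$ places the right edge in the region of absolute convergence of the defining series, so $Z_{2\vec a}(\sigma_2+it)=O(1)$ and hence $h_F(\sigma_2)\le0$; the crude bound of the previous paragraph gives $h_F(\sigma_1)\le\pi/2$. Because $F$ is holomorphic and of finite exponential type in the closed strip, the Phragm\'en--Lindel\"of principle (equivalently, convexity of the indicator of a function of exponential type) yields
\[
	h_F(\sigma)\le\frac{\pi}{2}\cdot\frac{\sigma_2-\sigma}{\sigma_2-\sigma_1},\qquad \sigma_1\le\sigma\le\sigma_2.
\]
Now I fix the target abscissa $\sigma$ together with $\sigma_2$ and let $\sigma_1\to-\infty$: the right-hand side tends to $0$, so in fact $h_F(\sigma)\le0$. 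This means $F(\sigma+it)=O(e^{\varepsilon|t|})$ for every $\varepsilon>0$, and dividing by the linear factor $z-\tfrac p2-|\vec a|_1$ (bounded away from $0$ for large $|t|$) gives the stated estimate~(\ref{eq:Z_vertical_growth}) for $Z_{2\vec a}$.

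The step I expect to demand the most care is the convexity input. One must verify that the finite-exponential-type bound of the second paragraph holds \emph{uniformly across the whole strip}, not merely on its two edges, so that the Phragm\'en--Lindel\"of principle legitimately applies and no anomalous interior growth can occur; this is why I insist on a bound uniform in $\sigma\in[\sigma_1,\sigma_2]$. The only genuinely analytic point feeding into this is the uniform exponential decay of the two integrands — in particular the claim that subtracting the reciprocity-predicted main term in the second integral really leaves an exponentially decaying integrand — while the rest is bookkeeping with Stirling's formula and the trivial estimate $|u^{z}|=u^{\Re z}$.
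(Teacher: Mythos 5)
Your proof is correct, but it takes a genuinely different route from the paper's. The paper never invokes Phragm\'en--Lindel\"of: it derives the asymptotics of $\vartheta_{2a}(t)$ as $t\to 0$ and $t\to\infty$, then cites \cite[Prop.~5]{MR1337752} (exponential smallness of Mellin transforms of functions analytic in a sector, extended to arbitrary vertical strips as in the proof of \cite[Prop.~4]{MR1337752}) to conclude that the transform $f^\ast_{2\vec a}(z)=\frac{(2\pi i)^{2|\vec a|_1}}{\pi^z}Z_{2\vec a}(z)\Gamma(z)$ of (\ref{eq:Z_riemann}) satisfies $f^\ast_{2\vec a}(\sigma+it)=O\left(e^{-(\pi/2-\varepsilon)|t|}\right)$, and finally divides by $\Gamma(\sigma+it)\sim\sqrt{2\pi}|t|^{\sigma-1/2}e^{-\pi|t|/2}$. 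You instead take the explicit continuation of Lemma~\ref{lem:Z_continuation} as the starting point, observe that its only source of exponential growth is $1/\Gamma$, whose rate $\pi/2$ is independent of $\Re z$, and eliminate that rate by convexity of the indicator after pushing the left edge of the strip to $-\infty$. Both arguments are sound and both ultimately rest on Stirling's formula; the difference is where the extra analytic input comes from. The paper imports it from Mellin-transform theory (the sector-analyticity of the theta products), which has the advantage of bounding the product $Z_{2\vec a}\Gamma$ directly --- the combination that reappears when justifying the contour displacement in Lemma~\ref{lem:g_a_asymptotics} --- but it asks the reader to check hypotheses of external propositions. Your route is self-contained given Lemma~\ref{lem:Z_continuation} and the classical convexity theorem for indicators of functions of finite exponential type in a strip, and the two points you flag as delicate are exactly the right ones and do hold: by the reciprocity relation (\ref{eq:vartheta_a_identity}), the power $t^{p/2+|\vec a|_1}$ subtracted in the second integral is precisely the algebraic main term of $\prod_j\vartheta_{2a_j}(1/t)$, so the remaining integrand is $O\left(t^{C}e^{-\pi t}\right)$ uniformly for $\Re z$ in a compact interval; and the crude bound $O\left(|t|^{M}e^{\pi|t|/2}\right)$ holds uniformly across each strip $[\sigma_1,\sigma_2]$ (only $C$ and $M$ depend on $\sigma_1$, never the exponential rate), so each application of the convexity theorem is legitimate and the passage $\sigma_1\to-\infty$ is a limit over a family of valid statements.
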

\begin{proof}
		Mellin transform asymptotics show that
		\begin{align*}
 				\vartheta_{2a}(t) &= \frac{(-4\pi^2)^a}{\pi^{a+1/2}}\frac{\Gamma(a+1/2)}{t^{a+1/2}}+O\left( t^M \right), \qquad t\to 0, \\
 				\vartheta_{2a}(t) &= [a=0]+O\left( t^{-M} \right),\qquad t\to\infty,
 		\end{align*}
 		for any $M>0$. Consequently, we have for $\vec a\in\N^p$ the asymptotics
		\begin{align*}
				\left( \prod_{i=0}^{p-1}\vartheta_{2a_i}(t) \right)-[\vec a=\vec 0] &= 
				\frac{(-4\pi)^{|\vec a|_1}}{t^{|\vec a|_1+p/2}}\frac{\prod_{j=0}^{p-1}\Gamma(a_j+1/2)}{\pi^{p/2}}
				-[\vec a=\vec 0]+O\left( t^M \right), \qquad t\to 0, \\
				\left( \prod_{i=0}^{p-1}\vartheta_{2a_i}(t) \right)-[\vec a=\vec 0] &= 
				O\left( t^{-M} \right),\qquad t\to\infty,
		\end{align*}
		for any $M>0$.
		Now, by \cite[Prop. 5]{MR1337752} we see that the Mellin transform of $\left( \prod_{i=0}^{p-1}\vartheta_{2\vec a}(t) \right)-[\vec a=\vec 0]$, viz.
		\begin{align*}
				f^\ast_{2\vec a}(z) &= \frac{(2\pi i)^{2|\vec a|_1}}{\pi^z}Z_{2\vec a}(z)\Gamma(z),
		\end{align*}
		satisfies
		\[
		f^\ast_{2\vec a}(\sigma+it)=O\left( e^{-(\pi/2-\varepsilon)|t|} \right),\qquad |t|\to\infty
		\]
		for any $\varepsilon>0$ and $\sigma$ in any closed subinterval of $(|\vec a|_1+p/2,\infty)$, which can be extended to any closed
		subinterval of $(-\infty,\infty)$ (see the proof of \cite[Prop. 4]{MR1337752} for details).
		The result now follows from the behaviour of the gamma function along vertical lines, viz.
		\[ \Gamma(\sigma+it) \sim \sqrt{2\pi}|t|^{\sigma-1/2}e^{-\pi|t|/2},\qquad |t|\to\infty. \]
\end{proof}

\section{Proof of Theorem~\ref{thm:sth-moment-asymptotics}: The moments of the height distribution}\label{sec:moments}
The goal of this section is to obtain an asymptotic expression for the $s$-th moment
$\E H_{n,p}^s$, where $\E$ denotes the expectation with respect to $\P$,
of this random variable as the length of the watermelons tends to infinity. Clearly, we have
\begin{align}
		\E H_{n,p}^s &= 		
	\frac{1}{M_{2n}^{(p)}}\sum_{h\ge 1}\left( h^s-(h-1)^s \right) \left(M_{2n}^{(p)}-M_{2n,h}^{(p)}\right),
	\qquad s\ge 1.
	\label{eq:wm_with_wall_sth_moment_def}
\end{align}

The proof of Theorem~\ref{thm:sth-moment-asymptotics} is based on a series of lemmas, and can be roughly summarised as follows.
For determining the asymptotics of $\E H_{n,p}^s$ 
we proceed as follows.
First, we find expressions in terms of determinants for the quantities $M_{2n,h}^{(p)}$ and $M_{2n}^{(p)}$.
This is accomplished by an application of a theorem by Lindstr\"om--Gessel--Viennot,
respectively of a theorem by Gessel and Zeilberger.
Second, we obtain asymptotics for
\begin{align} 
	&\label{eq:asymptotics_for}
	M_{2n,h}^{(p)}
	\qquad\textrm{and}\qquad
	\sum_{h\ge 1}\left( h^s-(h-1)^s \right)\left( M_{2n}^{(p)}-M_{2n,h}^{(p)} \right).
\end{align}
The proof of Theorem~\ref{thm:sth-moment-asymptotics} itself can be found at the end of this section.

We start with exact expressions for $M_{2n,h}^{(p)}$ and $M_{2n}^{(p)}$.
\begin{lemma}
		\label{lem:exact_expressions}
		We have
		\begin{align}
				M_{2n}^{(p)} &=\det_{0\le i,j<p}\left(\binom{2n}{n+i-j}-\binom{2n}{n-1-i-j}\right) \label{eq:exact_M_2n}\\
				\intertext{and}
				M_{2n,h}^{(p)} &= \det_{0\le i,j<p}\left( \sum_{m\in\Z}\left(\binom{2n}{n+m(h+1)+i-j}-\binom{2n}{n+m(h+1)-1-i-j}\right) \right),\qquad h\ge 0. \label{eq:exact_M_2n_h}
		\end{align}
\end{lemma}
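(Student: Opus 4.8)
The plan is to translate each $p$-watermelon into a family of $p$ non-intersecting lattice paths and then combine the Lindström–Gessel–Viennot lemma with the reflection principle for walks confined to a region (in the form due to Gessel and Zeilberger). Concretely, the $i$-th branch becomes a path with unit steps $(1,1)$ and $(1,-1)$ running from the source $A_i=(0,2i)$ to the sink $E_i=(2n,2i)$, and the wall condition becomes confinement to the half-plane $y\ge 0$ (for $M_{2n}^{(p)}$), respectively to the strip $0\le y<h$ with the top branch forbidden to reach height $h$ (for $M_{2n,h}^{(p)}$). Since both the sources and the sinks are listed in increasing order of height while every step increases the $x$-coordinate, two paths joining a lower start to a higher end and a higher start to a lower end are forced to meet; hence every non-intersecting family realises the identity permutation, and the resulting determinant carries the sign $+1$.

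First I would treat $M_{2n}^{(p)}$. By the preceding observation the lemma gives $M_{2n}^{(p)}=\det_{0\le i,j<p}(P_{ij})$, where $P_{ij}$ is the number of single paths from $A_i$ to $E_j$ that stay in $y\ge0$. The unconstrained count of such paths is $\binom{2n}{n+i-j}$, since the net height change $2(j-i)$ fixes the number of up-steps. Reflecting the initial point across the line $y=-1$ sends the height $2i$ to $-2-2i$ and puts the ``bad'' paths (those touching $y=-1$) in bijection with all paths from height $-2-2i$ to $2j$, of which there are $\binom{2n}{n-1-i-j}$. Subtracting yields $P_{ij}=\binom{2n}{n+i-j}-\binom{2n}{n-1-i-j}$, which is~\eqref{eq:exact_M_2n}.

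For $M_{2n,h}^{(p)}$ the same determinantal reduction applies, now with $Q_{ij}$ counting the single paths from $A_i$ to $E_j$ inside the strip. The relevant reflection group is the infinite dihedral group generated by the reflections in the two walls $y=-1$ and $y=h$; its even elements act on heights by the translations $y\mapsto y+2m(h+1)$ (sign $+1$) and its odd elements by $y\mapsto 2m(h+1)-2-y$ (sign $-1$). Summing, via the Gessel–Zeilberger reflection principle, the signed unconstrained path counts over the orbit of the starting height $2i$ produces the translation contributions $\binom{2n}{n+m(h+1)+i-j}$ and the reflection contributions $-\binom{2n}{n+m(h+1)-1-i-j}$, so that $Q_{ij}=\sum_{m\in\Z}(\binom{2n}{n+m(h+1)+i-j}-\binom{2n}{n+m(h+1)-1-i-j})$, which is~\eqref{eq:exact_M_2n_h}. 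The sum over $m$ is finite because $\binom{2n}{k}$ vanishes outside $0\le k\le 2n$, so all series here are well defined.

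The routine parts are the binomial bookkeeping, namely counting up-steps and locating the reflected endpoints. The step needing the most care is the organisation of the reflection group for the strip: one must check that, in the Weyl-chamber formulation where the vector of branch heights performs a single walk, the walk is reflectable, that the start and end configurations lie in the (closed) fundamental alcove, and that the sign character of the infinite dihedral group matches precisely the alternation between the two binomial families. Verifying these hypotheses is what legitimises passing from the geometric reflection principle to the clean determinant in~\eqref{eq:exact_M_2n_h}.
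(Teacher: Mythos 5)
Your treatment of~(\ref{eq:exact_M_2n}), and of~(\ref{eq:exact_M_2n_h}) in the regime $h\ge 2p$, is correct and follows essentially the same route as the paper: the Lindstr\"om--Gessel--Viennot reduction to a determinant of single-path counts, one reflection across $y=-1$ for the wall, and the Gessel--Zeilberger affine reflection principle (the type $\tilde C_p$ situation, which in your rank-one picture is the infinite dihedral group generated by the mirrors $y=-1$ and $y=h$) for the strip. The gap is that the lemma asserts~(\ref{eq:exact_M_2n_h}) for \emph{all} $h\ge 0$, and your proof says nothing about the range $0\le h<2p$. There $M_{2n,h}^{(p)}=0$ (a $p$-watermelon has height at least $2p-1$), but the starting configuration $(0,2,\dots,2p-2)$ then no longer lies in the closed fundamental alcove of the reflection group attached to the strip $0\le y\le h-1$ --- this is exactly the hypothesis you yourself flag at the end as needing verification, and for $h<2p$ it fails, so the Gessel--Zeilberger theorem does not apply. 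This is not vacuously harmless: the individual entries of the matrix in~(\ref{eq:exact_M_2n_h}) are in general nonzero for such $h$, so the vanishing of the determinant is a genuine assertion that still has to be proved.

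The paper closes this case by a separate elementary argument: the identity
\begin{multline*}
\sum_{m\in\Z}\left(\binom{2n}{n+m(h+1)+i-j}-\binom{2n}{n+m(h+1)-1-i-j}\right) \\
=-\sum_{m\in\Z}\left(\binom{2n}{n+m(h+1)+(h-i)-j}-\binom{2n}{n+m(h+1)-1-(h-i)-j}\right)
\end{multline*}
shows that the row of index $i$, viewed as a function of $i$, changes sign under $i\mapsto h-i$; hence for $h=2i$ the $i$-th row of the matrix vanishes, and for $h=2i+1$ the $i$-th and $(i+1)$-st rows differ only in sign, so the determinant is zero. You need to add an argument of this kind (equivalently, in your language, a proof that the signed orbit sums still cancel when the starting configuration is not in the alcove --- for instance because it is paired with itself under a reflection of the affine Weyl group) to cover $0\le h<2p$ and obtain the lemma as stated.
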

\begin{proof}[Proof (Sketch)]
For $h\ge 2p$ both equations follow from a theorem by Lindstr\"om--Gessel--Viennot
(see \cite[Corollary 3]{gesselviennot} or \cite[Lemma 1]{MR0335313}), respectively from a theorem of Gessel and Zeilberger~\cite{MR1092920}.
To be more specific, Equation~(\ref{eq:exact_M_2n}) follows from the type $C_p$ case of the main theorem in \cite{MR1092920}, while
Equation~(\ref{eq:exact_M_2n_h}) follows from the type $\tilde C_p$ case.

The reader should observe that the entries of the determinant~(\ref{eq:exact_M_2n}) are the numbers of lattice paths
from $(0,2i)$ to $(2n,2j)$ that do not cross the $x$-axis.
On the other hand, the entries of the determinant~(\ref{eq:exact_M_2n_h}) are the numbers of lattice paths from $(0,2i)$ to $(2n,2j)$
that do not cross the $x$-axis and have height smaller than $h$. These sums are obtained by a repeated reflection principle
(see, e.g., Mohanty~\cite[p.6]{MR554084}).
		
For $0\le h<2p$ 
the identity 
		 \begin{multline*}
		 	\sum_{m\in\Z}\left(\binom{2n}{n+m(h+1)+i-j}-\binom{2n}{n+m(h+1)-1-i-j}\right) \\
		 	=-\sum_{m\in\Z}\left(\binom{2n}{n+m(h+1)+(h-i)-j}-\binom{2n}{n+m(h+1)-1-(h-i)-j}\right)
		\end{multline*}
shows that the right hand side of (\ref{eq:exact_M_2n_h}) is equal to zero, since for $h=2i$ the $i$-th row of the determinant is equal to zero, and for $h=2i+1$ we see that the $i$-th and $(i+1)$-th row of the determinant
only differ by sign and thus are linear dependent.
\end{proof}
We now turn towards the problem of determining asymptotics for the expressions~(\ref{eq:asymptotics_for}).
Asymptotics for the total number of watermelons are easily established since the determinant in
(\ref{eq:exact_M_2n}) admits a simple closed form. The result is stated in the following lemma.
\begin{lemma}
		We have
 	\begin{align*}
		M_{2n}^{(p)} &= 
			4^{\binom{p}{2}}\left( \prod_{i=0}^{p-1}(2i+1)! \right)\binom{2n}{n}^pn^{-p^2}\left( 1+O\left( n^{-1} \right) \right)
	\end{align*}
	as $n\to\infty$.
	\label{lem:wm_with_wall_asymptotic_total}
\end{lemma}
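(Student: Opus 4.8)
The plan is to avoid a direct entrywise expansion of the determinant (which hides delicate cancellations) and instead evaluate (\ref{eq:exact_M_2n}) in closed form \emph{before} passing to the limit. By Lemma~\ref{lem:exact_expressions} and the symmetry $\binom{2n}{n-1-i-j}=\binom{2n}{n+1+i+j}$ we have
\[
M_{2n}^{(p)}=\det_{0\le i,j<p}\left(\binom{2n}{n+i-j}-\binom{2n}{n+1+i+j}\right).
\]
The first step is to reduce this to a product of factorials. The cleanest route is to perform elementary row and column operations to identify the determinant with the Hankel determinant of Catalan numbers, $\det_{0\le i,j<p}\left(C_{n+i+j}\right)$, where $C_m=\frac{1}{m+1}\binom{2m}{m}$; for $p=1$ the single entry is already $C_n$, and for $p=2$ a short computation gives $C_nC_{n+2}-C_{n+1}^2$. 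Such a Hankel determinant admits a classical closed-form product evaluation, obtainable either from Krattenthaler's determinant calculus or, equivalently, from the Lindstr\"om--Gessel--Viennot interpretation combined with the hook-content formula, and already recorded in the watermelon enumeration of Krattenthaler, Guttmann and Viennot~\cite{MR1801472}. This step thus produces an exact expression for $M_{2n}^{(p)}$ as a product of factorials of linear forms in $n$, the number of factors depending only on $p$.

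With the closed-form product in hand, the second step is purely asymptotic. I would take logarithms and apply Stirling's formula $\log m!=m\log m-m+\tfrac12\log(2\pi m)+O(m^{-1})$ to each factor. The exponentially growing part of the product assembles into $\binom{2n}{n}^p$, the polynomial part of the Stirling corrections assembles into the power $n^{-p^2}$, and the $n$-independent prefactors collect into the constant $4^{\binom p2}\prod_{i=0}^{p-1}(2i+1)!$, while carrying the next Stirling term uniformly bounds the relative error by $O(n^{-1})$. This yields exactly the claimed form. As a consistency check, for $p=1$ the product is $C_n=\binom{2n}{n}/(n+1)$, whose expansion is $\binom{2n}{n}n^{-1}(1+O(n^{-1}))$, matching the statement since $4^0\cdot 1!=1$.

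I expect the only genuine obstacle to be the determinant evaluation itself; once the exact product is available the remaining asymptotics are routine Stirling bookkeeping. This is precisely why the total count is ``easy'' compared with the height-restricted count, where no such clean product exists and the surviving leading terms must be extracted through the cancellations handled by Lemma~\ref{lem:wm_with_wall_asymptotic_for_sum}. It is instructive to see why a naive approach fails even here: factoring $\binom{2n}{n}$ out of each row and using $\binom{2n}{n+k}/\binom{2n}{n}=1-k^2/n+O(n^{-2})$ gives each entry as $\binom{2n}{n}(2i+1)(2j+1)/n+O(n^{-2})$, so the leading-order matrix is a rank-one outer product and its determinant vanishes for $p\ge 2$. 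Extracting the first surviving term would then require expanding the entries to order $n^{-p}$ and controlling the ensuing cancellations, which is exactly the complication the closed-form route sidesteps.
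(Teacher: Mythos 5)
Your proposal is correct and follows essentially the same route as the paper: the paper also evaluates the determinant~(\ref{eq:exact_M_2n}) in closed form (via \cite[Theorem 30]{MR1701596}, yielding $M_{2n}^{(p)}=\prod_{j=0}^{p-1}\binom{2n+2j}{n}/\binom{n+2j+1}{n}$) and then reads off the asymptotics from the resulting product. Your intermediate reduction to the Catalan Hankel determinant $\det_{0\le i,j<p}(C_{n+i+j})$ is a valid but unnecessary detour, since the cited determinant evaluation applies directly to the original determinant, after which the Stirling (indeed, purely rational-function) bookkeeping is the same.
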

\begin{proof}
		The determinant~(\ref{eq:exact_M_2n}) can be evaluated in closed form, e.g., by means of
		\cite[Theorem 30]{MR1701596}, and is in fact given by
		\[
		M_{2n}^{(p)} = \prod_{j=0}^{p-1}\frac{\binom{2n+2j}{n}}{\binom{n+2j+1}{n}}
		=\binom{2n}{n}^p\left( \prod_{j=0}^{p-1}(2j+1)! \right)
		\left( \prod_{j=0}^{p-1}\frac{(2n+2j)\dots(2n+1)}{(n+2j+1)(n+2j)^2\dots(n+1)^2} \right).
		\]
		This proves, upon determining asymptotics for the right-most product, the result as stated in the lemma.

		For a comprehensive discussion and references of this counting problem we refer to \cite[Section 4]{MR1801472}.
\end{proof}

Asymptotics for the second part of (\ref{eq:asymptotics_for}) are much harder to obtain. As a first step 
we note that
\begin{multline}
		\label{eq:difficult_part}
		\sum_{h\ge 1}\left( h^s-(h-1)^s \right)\left( M_{2n}^{(p)}-M_{2n,h}^{(p)} \right)\\
		=-\sum_{h\ge 1}\left( h^s-(h-1)^s \right)\sum_{\vec m\neq \vec 0}\det_{0\le i,j<p}\left( \binom{2n}{n+m_i(h+1)+i-j}-\binom{2n}{n+m_i(h+1)-1-i-j} \right)
\end{multline}
by (\ref{eq:exact_M_2n}) and (\ref{eq:exact_M_2n_h}), where the inner sum ranges over $\Z^p\setminus\{\vec 0\}$.

For determining asymptotics for (\ref{eq:difficult_part}) we closely follow the proof of
de Bruijn, Knuth and Rice~\cite{MR0505710} (in our case we have to overcome some additional difficulties).
For sake of convenience, we give a short plan of the proof. As a first step we factor $\binom{2n}{n}$ out of each
row of the determinant on the right-hand side of (\ref{eq:exact_M_2n_h}).
We then replace the quotients of binomial coefficients by its (sufficiently accurate) asymptotic series expansion, which is
determined with the help of Stirling's asymptotic series for the factorials (see Lemma~\ref{lem:approx_binom_quot}).
This shows that the asymptotic series expansion for (\ref{eq:difficult_part}) can be expressed in terms
of products of derivatives of Jacobi's theta functions we considered in the last section.
With the help of the Mellin transforms and the results of the last section we are able to derive asymptotics
for these functions (see Lemma~\ref{lem:g_a_asymptotics}).
In Lemma~\ref{lem:wm_with_wall_asymptotic_for_sum} we finally obtain the desired asymptotics for (\ref{eq:difficult_part}).

We start with the asymptotic series expansion for the quotients of binomial coefficients mentioned above.
\begin{lemma}
For $|m-z|\le n^{5/8}$ and $N>1$ we have the asymptotic expansion
		\begin{multline} \label{eq:approx_binom_quot}
				\frac{\binom{2n}{n+m-z}}{\binom{2n}{n}}
				= e^{-m^2/n}\\ \times
				\left(
				\sum_{u=0}^{4N+1}\left( -\frac{z}{\sqrt n} \right)^{u}
				\left(
					\phi_u\left( \frac{m}{\sqrt n} \right)
					+\sum_{l=1}^{3N+1}n^{-l}\sum_{k=0}^{u-1}
					\sum_{r=1}^{2l}F_{r,l}\binom{2r}{u-k}\phi_k\left( \frac{m}{\sqrt n}  \right)
					\left( \frac{m}{\sqrt n}  \right)^{2r+k-u}
				\right)
				+O\left( n^{-1-2N} \right)
				\right)
		\end{multline}
		as $n\to\infty$. 
		Here, the $F_{r,l}$ are some constants the explicit form of which is of no importance in the sequel, and 
		$(-1)^kk!\phi_k(w)$ is the $k$-th Hermite polynomial, that is
		\begin{align}
				\phi_k(z) &= \sum_{m\ge 0}\frac{(-1)^m}{m!}\binom{m}{k-m}(2z)^{2m-k},\qquad k\ge 0.
				\label{eq:phi_s_def}
 		\end{align}
		\label{lem:approx_binom_quot}
\end{lemma}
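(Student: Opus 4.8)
The plan is to read the expansion off Stirling's asymptotic series for the factorials, reorganise the resulting correction series by powers of $1/n$ after exponentiating, and finally recognise the Hermite structure through the generating function $\sum_{k\ge 0}\phi_k(x)s^k=e^{-2xs-s^2}$, which follows at once from (\ref{eq:phi_s_def}).

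First I would set $w=m-z$ and write $\binom{2n}{n+m-z}/\binom{2n}{n}=(n!)^2/\big((n+w)!\,(n-w)!\big)$. Taking logarithms and inserting Stirling's series $\log k!=(k+\tfrac12)\log k-k+\tfrac12\log(2\pi)+\sum_{j\ge 1}\frac{B_{2j}}{2j(2j-1)k^{2j-1}}+\cdots$ for $k\in\{n,n+w,n-w\}$, the part linear in $\log$ collapses to $-\big((n+w)\log(1+\tfrac wn)+(n-w)\log(1-\tfrac wn)\big)$, and expanding the logarithms turns this into $-\sum_{j\ge 1}\frac{1}{j(2j-1)}\frac{w^{2j}}{n^{2j-1}}$. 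Adding the half-integer term $-\tfrac12\log(1-w^2/n^2)$ and the Bernoulli terms gives
\[
\log\frac{\binom{2n}{n+w}}{\binom{2n}{n}}=-\frac{w^2}{n}+\sum_{l\ge 1}\frac{1}{n^{l}}\,G_l\!\left(\frac{w^2}{n}\right)+(\text{Stirling remainder}),
\]
where each $G_l$ is a polynomial (the $j=1$ term furnishes the leading $-w^2/n$). The hypothesis $|w|=|m-z|\le n^{5/8}$ is exactly what makes $w^2/n\le n^{1/4}$, so that every correction is a genuinely decreasing power of $n$, and it also bounds the Stirling remainder.

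Next I would exponentiate. Writing $e^{-w^2/n}=e^{-m^2/n}e^{(2mz-z^2)/n}$ and applying the generating function above with $x=m/\sqrt n$, $s=-z/\sqrt n$ yields the main factor $e^{-m^2/n}\sum_{k}\phi_k(m/\sqrt n)(-z/\sqrt n)^{k}$, since $e^{(2mz-z^2)/n}=\sum_k\phi_k(m/\sqrt n)(-z/\sqrt n)^k$. Exponentiating the correction series produces $1+\sum_{l\ge 1}n^{-l}\sum_{r}F_{r,l}w^{2r}$; because the lowest-order correction carrying the highest power of $w$ is $w^4/n^3$, its $l$-th power $w^{4l}/n^{3l}$ shows that $r$ need only run up to $2l$, and a count of the powers of $n$ confirms that a term $w^{2r}/n^{r+l}$ lands at order $n^{-l}$, so that $F_{r,l}$ is the coefficient of $w^{2r}/n^{r+l}$ in the exponentiated correction. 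Substituting $w=m-z$, expanding $w^{2r}=\sum_i\binom{2r}{i}m^{2r-i}(-z)^i$, multiplying the two series and collecting the coefficient of $(-z/\sqrt n)^u$ then produces precisely the factor $\binom{2r}{u-k}\phi_k(m/\sqrt n)(m/\sqrt n)^{2r+k-u}$ recorded in (\ref{eq:approx_binom_quot}).

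The main obstacle is the uniform error estimate rather than the (routine) Stirling computation. After truncating the $z$-expansion at $u=4N+1$, the $1/n$-expansion at $l=3N+1$, and $r$ at $2l$, one must show that the total discarded contribution is $O(n^{-1-2N})$ uniformly for $|m-z|\le n^{5/8}$. This calls for combining the Stirling remainder bound, the explicit polynomial growth $\phi_k(x)=O(1+|x|^{k})$, and the constraint on $w$; the three truncation orders $4N+1$, $3N+1$, $2l$ and the exponent $5/8$ are calibrated precisely so that each source of error is of the required size. Keeping track of these interacting truncations is the delicate part of the argument.
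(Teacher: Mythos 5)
Your proposal is correct and follows essentially the same route as the paper's proof: Stirling's series for the logarithm of the binomial quotient, Taylor expansion and exponentiation reorganised in powers of $n^{-1}$ (giving the constants $F_{r,l}$ with $r\le 2l$), then the Hermite generating function $e^{-(m-z)^2/n}=e^{-m^2/n}\sum_{k}\phi_k\left(m/\sqrt n\right)\left(-z/\sqrt n\right)^k$ and binomial expansion of $\left((m-z)/\sqrt n\right)^{2r}$ before collecting powers of $z$. One remark: that collection step actually produces inner-sum terms up to $k=u$ (the $i=0$ term $\binom{2r}{0}\phi_u\left(m/\sqrt n\right)\left(m/\sqrt n\right)^{2r}$), not $k=u-1$ as displayed in (\ref{eq:approx_binom_quot}); this small discrepancy is present in the paper's own statement as well, so it reflects on the formula as printed rather than on your argument.
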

\begin{proof}
		For sake of convenience, set $x=(m-z)/n$. 
		With the help of Stirling's asymptotic series for the factorials we see that for $x$ sufficiently small,
		$|x|<\frac{1}{2}$, say, we have 
		\begin{multline*}
		\log\frac{\binom{2n}{n+m-z}}{\binom{2n}{n}}
		=\left( n+\frac{1}{2} \right)\log\frac{1}{1-x^2}-nx\log\frac{1+x}{1-x} \\
		+\sum_{k=1}^N\frac{B_{2k}n^{1-2k}}{2k(2k-1)}\left( 2-(1+x)^{1-2k}-(1-x)^{1-2k} \right)
		+O\left( n^{-1-2N} \right)
		\end{multline*}
		for all fixed $N>0$ as $n\to\infty$.
		Here, $B_{k}$ denotes the $k$-th Bernoulli number defined via $\sum_{k\ge 0}B_kt^k/k!=t/(e^t-1)$.
		
		For the range $|x|\le n^{-1/4}$ we further obtain by Taylor series expansion and some simplifications
		the expression
		\begin{multline*}
		\log\frac{\binom{2n}{n+m-z}}{\binom{2n}{n}}
		=-n\left( \sum_{r=1}^{4N+3}\frac{x^{2r}}{r(2r-1)} \right)
		+\frac{1}{2}\left( \sum_{r=1}^{4N+1}\frac{x^{2r}}{r} \right) \\
		-\left( \sum_{r=1}^{4N-1}\left( \sum_{k=1}^N\frac{B_{2k}n^{1-2k}}{k(2k-1)}\binom{-2k+1}{r} \right)x^{2r} \right)
		+O\left( n^{-1-2N} \right).
		\end{multline*}
		
		Further restricting ourselves to the range $|x|\le n^{-3/8}$
		we obtain, upon taking the exponential of both sides of the expression above and another Taylor series expansion,
		the asymptotic series expansion
		\[
		\frac{\binom{2n}{n+m-z}}{\binom{2n}{n}}
		=e^{-nx^2}
		\left(1+\sum_{r=1}^{4N+3}
		\left( \sum_{l=-\floor{r/2}}^{2N-\floor{3r/4}}F_{r,l+r}n^{-l} \right)x^{2r}+O\left(n^{-1-2N}\right)  \right)
		\]
		for some constants $F_{r,l}$.

		Now, if $N>1$ we obtain upon interchanging the two sums on the right-hand side above, replacing $x$ with
		its defining expression $(m-z)/n$ and simple rearrangements the expression
		\[
		\frac{\binom{2n}{n+m-z}}{\binom{2n}{n}}
		=e^{-(m-z)^2/n}
		\left( 1+\sum_{l=1}^{3N+1}n^{-l}
		\sum_{r=1}^{2l}F_{r,l}\left( \frac{m-z}{\sqrt n} \right)^{2r}+O\left( n^{-1-2N} \right)\right)
		\]
		Finally, expanding $e^{-(m-z)^2/n}$ in the expression above in the form
		\[
		e^{-(m-z)^2/n} =
		e^{-m^2/n}\sum_{k\ge 0}\phi_k\left( \frac{m}{\sqrt n} \right)\left( -\frac{z}{\sqrt n} \right)^k,
		\]
		and collecting powers of $z$, we obtain the result.
		Here, the $\phi_k(m/\sqrt n)$ represent certain polynomials the explicit form of which is given in the lemma.
\end{proof}

We mentioned before, that the non-normalised $s$-th moment (\ref{eq:difficult_part}) is a linear combination of certain
functions related to products of the functions $\vartheta_{2a}(t)$, $a\ge 0$,
considered in the last section. In the next lemma
we obtain asymptotics for these functions with the help of the Mellin transform and the results proved in the last section.
\begin{lemma} For $\vec a\in\Z^p$, $\vec a\ge\vec 0$, and $k\in\N$ define the function
		\begin{align}
				g_{k,\vec a}(n) &= \sum_{h\ge 1}(h+1)^k
				\sum_{\substack{\vec m\in (h+1)\Z^p \\ \vec m\neq \vec 0}}e^{-|\vec m|_2^2/n}\left( \frac{\vec m}{\sqrt n} \right)^{2\vec a}.
				\label{eq:g_a_def}
		\end{align}
		For any fixed $M>0$ we have the asymptotics
		\begin{equation}
			\label{eq:g_a_asymptotics}
			g_{k,\vec a}(n)=
			\left( \prod_{j=0}^{p-1}\frac{(2a_j)!}{4^{a_j}a_j!} \right)\Omega_k(n)
			+\omega_{k,\vec a}n^{(k+1)/2}
			+\left(1-B_{k+1}\frac{(-1)^k}{k+1} \right)\left[ \textrm{$\vec a=\vec 0$}\right] +O\left( n^{-M} \right)
		\end{equation}
		as $n\to\infty$, where
		\[
		\Omega_k(n) = (n\pi)^{p/2}\times
		\begin{cases} \gamma-1+\log\sqrt n & \textrm{if $p=k+1$,} \\
					  \zeta(p-k)-1 & \textrm{else},
		\end{cases}
		\]
		and
		\[
		\omega_{k,\vec a}= \frac{1}{2}\times
		\begin{cases} 
				\lim\limits_{z\to p/2}
						\left( Z_{2\vec a}(z+|\vec a|_1)\Gamma(z+|\vec a|_1)
						-\left( \prod\limits_{j=0}^{p-1}\frac{(2a_j)!}{4^{a_j}a_j!} \right)\frac{\pi^{p/2}}{z-\frac{p}{2}}
						\right)
								& \textrm{if $p=k+1$,} \\
								\Gamma\left(\frac{k+1}{2}+|\vec a|_1  \right)Z_{2\vec a}\left( \frac{k+1}{2}+|\vec a|_1 \right)
					  			& \textrm{else.}
		\end{cases}
		\]
		Here,
		$\gamma=0.5772\dots$ is the Euler-Mascheroni constant.
		\label{lem:g_a_asymptotics}
\end{lemma}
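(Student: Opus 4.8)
The plan is to turn $g_{k,\vec a}(n)$ into a single Mellin--Barnes integral whose integrand is assembled from the objects of Section~\ref{sec:jacobi}, and then to move the line of integration to the left, reading the asymptotic expansion off the residues that are crossed. To begin, writing $d=h+1$ and $\vec m=d\vec l$ with $\vec l\in\Z^p\setminus\{\vec 0\}$ and pulling the powers of $d$ out of the Gaussian gives
\[
	g_{k,\vec a}(n)=n^{-|\vec a|_1}\sum_{d\ge 2}d^{k+2|\vec a|_1}\sum_{\vec l\neq\vec 0}e^{-d^2|\vec l|_2^2/n}\,\vec l^{2\vec a}.
\]

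Into the inner sum I would insert the Mellin--Barnes representation $e^{-x}=\frac{1}{2\pi i}\int_{(c)}\Gamma(s)x^{-s}\,ds$, valid for $x>0$ and $c>0$, recognise $\sum_{\vec l\neq\vec 0}\vec l^{2\vec a}|\vec l|_2^{-2s}=Z_{2\vec a}(s)$, and then perform the sum over $d$, which gives $\sum_{d\ge 2}d^{k+2|\vec a|_1-2s}=\zeta(2s-k-2|\vec a|_1)-1$. Taking $c$ larger than both $p/2+|\vec a|_1$ and $(k+2|\vec a|_1+1)/2$ renders every series absolutely convergent and, with the decay of $\Gamma(s)$ on verticals, justifies the interchanges by Fubini. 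After the shift $z=s-|\vec a|_1$ this produces
\[
	g_{k,\vec a}(n)=\frac{1}{2\pi i}\int_{(c')}\Gamma(z+|\vec a|_1)\,n^z\,Z_{2\vec a}(z+|\vec a|_1)\bigl(\zeta(2z-k)-1\bigr)\,dz .
\]

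I would then shift the contour to $\Re(z)=-M$. There $n^z$ has modulus $n^{-M}$; by Lemma~\ref{lem:Z_vertical_growth} the factor $Z_{2\vec a}$ grows only like $e^{\varepsilon|t|}$, while $\Gamma$ decays like $e^{-\pi|t|/2}$ and $\zeta$ grows at most polynomially along verticals, so the integrand is absolutely integrable and the leftover integral is $O(n^{-M})$; this also legitimises the shift. Thus the expansion is the sum of the residues in the strip $-M<\Re(z)<c'$. The simple pole of $Z_{2\vec a}(z+|\vec a|_1)$ at $z=p/2$ has, by Equation~(\ref{eq:Z_continuation}), residue contributing $\bigl(\prod_j\tfrac{(2a_j)!}{4^{a_j}a_j!}\bigr)(n\pi)^{p/2}(\zeta(p-k)-1)$; the simple pole of $\zeta(2z-k)$ at $z=(k+1)/2$, where $\Res_{z=(k+1)/2}\zeta(2z-k)=\tfrac12$, contributes $\tfrac12\Gamma\bigl(\tfrac{k+1}{2}+|\vec a|_1\bigr)Z_{2\vec a}\bigl(\tfrac{k+1}{2}+|\vec a|_1\bigr)n^{(k+1)/2}$; and, only when $\vec a=\vec 0$, the pole of $\Gamma(z)$ at $z=0$, where $Z_{\vec 0}(0)=-1$, gives residue $1-\zeta(-k)=1-B_{k+1}\tfrac{(-1)^k}{k+1}$. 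The remaining poles of $\Gamma$ at the negative integers are cancelled by the zeros of $Z_{2\vec a}$ at the non-positive integers from Lemma~\ref{lem:Z_continuation}, whence they contribute nothing --- which is precisely why the last term carries $[\vec a=\vec 0]$. Matching the first two residues against $\Omega_k(n)$ and $\omega_{k,\vec a}n^{(k+1)/2}$ disposes of the case $p\neq k+1$.

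The main obstacle is the coincidence $p=k+1$, where these two rightmost poles merge at $z=p/2=(k+1)/2$ into a pole of order two. Here I would Laurent-expand the integrand about $z=p/2$, using $Z_{2\vec a}(z+|\vec a|_1)=\frac{R}{z-p/2}+Z_0+\cdots$ with $R$ from Equation~(\ref{eq:Z_continuation}), $\zeta(2z-k)-1=\frac{1}{2(z-p/2)}+(\gamma-1)+\cdots$, and $\Gamma(z+|\vec a|_1)n^z=\Gamma\bigl(\tfrac p2+|\vec a|_1\bigr)n^{p/2}\bigl(1+(z-\tfrac p2)(\psi(\tfrac p2+|\vec a|_1)+\log n)+\cdots\bigr)$. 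The coefficient of $(z-p/2)^{-1}$ then splits into a piece proportional to $\log\sqrt n+\gamma-1$, reproducing $\bigl(\prod_j\tfrac{(2a_j)!}{4^{a_j}a_j!}\bigr)\Omega_k(n)$ in its $p=k+1$ form, and a finite remainder which, upon collecting the $\psi$-term with $Z_0$, is exactly $\omega_{k,\vec a}n^{p/2}$ with $\omega_{k,\vec a}$ given by the regularised limit in the statement. I expect checking that this finite part agrees with the stated limit to be the most calculation-heavy step, though it is just careful bookkeeping of the order-two Laurent expansion.
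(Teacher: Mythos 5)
Your proposal is correct and follows essentially the same route as the paper: both identify the Mellin transform $g^\ast_{k,\vec a}(z)=\bigl(\zeta(2z-k)-1\bigr)\Gamma(z+|\vec a|_1)Z_{2\vec a}(z+|\vec a|_1)$ (your direct Mellin--Barnes insertion is just the harmonic-sum rule carried out by hand), invert, shift the contour left using Lemma~\ref{lem:Z_vertical_growth} together with the standard vertical estimates for $\Gamma$ and $\zeta$, and collect the same residues, including the order-two pole at $z=p/2$ when $p=k+1$ whose Laurent expansion yields exactly the $\gamma-1+\log\sqrt n$ term and the regularised limit $\omega_{k,\vec a}$.
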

\begin{proof}
		First, note that the function $g_{k,\vec a}(n)$ can be written in terms of derivatives of theta functions, viz.
		\begin{align*}
				g_{k,\vec a}(n) &=
				(-4\pi)^{-|\vec a|_1}\sum_{h\ge 1}(h+1)^k\left( \frac{(h+1)^2}{n\pi} \right)^{|\vec a|_1}
				\left( \prod_{j=0}^{p-1}\vartheta_{2a_j}\left( \frac{(h+1)^2}{n\pi} \right)-[\vec a=\vec 0] \right).
		\end{align*}
		Now, by the harmonic sum rule and Equation~(\ref{eq:Z_riemann}),
		the Mellin transform of $g_{k,\vec a}(n)$ is seen to be 
		\begin{align*}
				g^\ast_{k,\vec a}(z) &= \int_0^\infty g_{k,\vec a}(x^{-1})x^{z-1}dx
			=\left( \zeta(2z-k)-1 \right) \Gamma(z+|\vec a|_1)Z_{2\vec a}(z+|\vec a|_1),
			\qquad \Re(z)>\frac{1}{2}\max\left\{ p,k+1 \right\}.
	\end{align*}
		Consequently, the function $g_{k,\vec a}(n)$ can be represented with the help of the inverse Mellin transform by the contour integral
		\begin{align*}
				g_{k,\vec a}(n)=\frac{1}{2\pi i}\int_{c-i\infty}^{c+i\infty}g^{\ast}_{k,\vec a}(z)n^z dz,
				\qquad c>\frac{1}{2}\max\left\{ p,k+1 \right\}.
		\end{align*}
		Asymptotics are now being obtained by pushing the line of integration to the left and taking into account the
		residues of the poles of the integrand.

		From the well-known analytic behaviour of the gamma and the zeta function (see, e.g., \cite{MR0178117}) and the 
		analytic behaviour of $Z_{2\vec a}(z)$ as given by Lemma~\ref{lem:Z_continuation} we infer that
		the integrand $g_{k,\vec a}^{\ast}(z)n^z$ has potential poles at $z=p/2$, $z=(k+1)/2$ and $z=-|\vec a|_1-m$ for
		$m\in\N$.
		For $p\neq k+1$ all poles are of order one. Furthermore, the residues are given by
		\begin{align*}
				\Res_{z=p/2} g^{\ast}_{k,\vec a}(z)n^z
				&= \left(\zeta(p-k)-1\right)\left( \prod_{j=0}^{p-1}\frac{(2a_j)!}{4^{a_j}a_j!} \right)(n\pi)^{p/2}\\
				\Res_{z=(k+1)/2} g^{\ast}_{k,\vec a}(z)n^z
				&=\frac{1}{2}\Gamma\left(\frac{k+1}{2}+|\vec a|_1  \right)Z_{2\vec a}\left( \frac{k+1}{2}+|\vec a|_1 \right)n^{(k+1)/2} \\
				\Res_{z=-|\vec a|_1-m} g^{\ast}_{k,\vec a}(z)n^z
				&= -\left( B_{k+1}\frac{(-1)^k}{k+1}-1 \right)\left[ \textrm{$\vec a=\vec 0$ and $m=0$} \right],
		\end{align*}
		where $B_l$ denotes the $l$-th Bernoulli number defined via $\sum_{l\ge 0}B_l t^l/l!=t/( e^t-1)$.
		
		In the case $p=k+1$, the only difference is the pole at $z=p/2$, which is now a pole of order two.
		By Lemma~\ref{lem:Z_continuation} we know that
		\[
		\Res_{z=p/2} Z_{2\vec a}(z+|\vec a|_1)\Gamma(z+|\vec a|_1)
		=\left( \prod_{j=0}^{p-1}\frac{(2a_j)!}{4^{a_j}a_j!} \right)\pi^{p/2},
		\]
		and consequently, we have 
		\begin{multline*}
				\Res_{z=p/2} g^{\ast}_{k,\vec a}(z)n^z
				=
				\left( \prod_{j=0}^{p-1}\frac{(2a_j)!}{4^{a_j}a_j!} \right)
						\left( \gamma-1+\log\sqrt n\right)(\pi n)^{p/2} \\
						+\frac{n^{p/2}}{2}\lim_{z\to p/2}
						\left( Z_{2\vec a}(z+|\vec a|_1)\Gamma(z+|\vec a|_1)
						-\left( \prod_{j=0}^{p-1}\frac{(2a_j)!}{4^{a_j}a_j!} \right)\frac{\pi^{p/2}}{z-\frac{p}{2}}
						\right).
		\end{multline*}
		Note that the limit above is equal to the constant term in the Laurent expansion of
		$Z_{2\vec a}(z+|\vec a|_1)\Gamma(z+|\vec a|_1)$ around its pole $z=p/2$.

		For completing the proof we have to show the admissibility of the displacement of the contour of integration above.
		But this follows by well known estimates for the gamma and the zeta function along vertical lines in the
		complex plane together with Lemma~\ref{lem:Z_vertical_growth}.
		See \cite{MR1337752} for details.
\end{proof}
This last lemma finally enables us to determine the asymptotics for the
non normalised $s$-th moment~(\ref{eq:difficult_part}).
\begin{lemma} \label{lem:wm_with_wall_asymptotic_for_sum}
		We have the asymptotics
		\begin{multline*} 
			\sum_{h\ge 1}\left( h^s-(h-1)^s \right)\left( M_{2n}^{(p)}-M_{2n,h}^{(p)} \right) \\
			=2^{-p}\binom{2n}{n}^pn^{-p^2}
			\left(s\lambda_sn^{s/2}-3\binom{s}{2}\lambda_{s-1}n^{(s-1)/2}
			-\frac{3}{2}2^{p^2}\left( \prod_{i=0}^{p-1}(2i+1)! \right)
			+O\left( n^{s/2-1}+n^{p/2-p^2}\log n \right) \right)
		\end{multline*}
		as $n\to\infty$, where
		\[
		\lambda_k=-\sum_{\vec a\ge\vec 0}(-4)^{|\vec a|_1}
		\det_{0\le i,j<p}\left( \frac{(2i+2j+2)!}{(i+j+1-a_i)!\ (2a_i)!} \right)\omega_{k-1,\vec a},
		\qquad k>0,
		\]
		with $\omega_{k-1,\vec a}$ being defined in Lemma~\ref{lem:g_a_asymptotics}.
\end{lemma}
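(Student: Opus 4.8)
The plan is to follow the route outlined before the statement: starting from the determinantal expression~(\ref{eq:difficult_part}), I isolate the contribution of each monomial in the Stirling-type expansion of the binomial quotients, recognise the resulting double sums as the functions $g_{k,\vec a}(n)$ of Lemma~\ref{lem:g_a_asymptotics}, and then collect orders with the help of that lemma. First I would factor $\binom{2n}{n}$ out of each of the $p$ rows of the determinant in~(\ref{eq:difficult_part}) and apply Lemma~\ref{lem:approx_binom_quot} to both binomial quotients in the $(i,j)$-entry, taking $m=m_i(h+1)$ and the shifts $z=j-i$, respectively $z=1+i+j$. Since the Gaussian prefactor $e^{-m_i^2(h+1)^2/n}$ depends only on the row index $i$, it can be pulled out of row $i$, the product over rows being $e^{-(h+1)^2|\vec m|_2^2/n}$. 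Expanding the determinant by multilinearity and extracting the coefficient of a fixed monomial $\prod_i w_i^{2a_i}$ in the variables $w_i=m_i(h+1)/\sqrt n$, the terms with any odd exponent vanish upon summing over $\vec m\in\Z^p$ by the symmetry $m_i\mapsto -m_i$, so only even exponents $2a_i$ remain. After the substitution $\vec M=(h+1)\vec m$ and the expansion $h^s-(h-1)^s=s(h+1)^{s-1}-3\binom{s}{2}(h+1)^{s-2}+O\!\left((h+1)^{s-3}\right)$, the sums over $\vec m$ and $h$ turn exactly into $g_{s-1,\vec a}(n)$, $g_{s-2,\vec a}(n)$, and lower, reducing~(\ref{eq:difficult_part}) to
\[
  -\binom{2n}{n}^{p}\sum_{\vec a\ge\vec 0}\Delta_{\vec a}(n)\left(s\,g_{s-1,\vec a}(n)-3\binom{s}{2}g_{s-2,\vec a}(n)+O\!\left(g_{s-3,\vec a}(n)\right)\right),
\]
where $\Delta_{\vec a}(n)$ is the determinant produced by the row-wise $\phi$-expansion.

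The next step is to resum the rows in closed form. Using the generating identity for the $\phi_k$, the coefficient of $w_i^{2a_i}$ in row $i$ equals
\[
  \frac{2^{2a_i}\,n^{-a_i}}{(2a_i)!}\left((j-i)^{2a_i}e^{-(j-i)^2/n}-(i+j+1)^{2a_i}e^{-(i+j+1)^2/n}\right),
\]
so that, writing $x_j=2j+1$ and $y_i=2i+1$,
\[
  \Delta_{\vec a}(n)=\left(\prod_{i=0}^{p-1}\frac{n^{-a_i}}{(2a_i)!}\right)\det_{0\le i,j<p}\bigl(F_{a_i}(x_j,y_i;n)\bigr),\quad
  F_a(x,y;n)=(x-y)^{2a}e^{-(x-y)^2/(4n)}-(x+y)^{2a}e^{-(x+y)^2/(4n)}.
\]
The crucial structural feature is that $F_a(x,y;n)$ is an odd function of $x$ and an odd function of $y$. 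I would expand it as a double series $F_a(x,y;n)=\sum_{l,k\ge 0}c^{(a)}_{l,k}(n)x^{2l+1}y^{2k+1}$ with $c^{(a)}_{l,k}(n)=O\!\left(n^{-(l+k+1-a)}\right)$, the coefficients being multiples of $\binom{2(l+k+1)}{2l+1}$, and apply the Cauchy--Binet formula in the column variables $x_j$ and, when the coefficient matrix does not depend on the row, also in the row variables $y_i$. Because $F_a$ is odd in each argument, every surviving term must use pairwise distinct $x^2$-exponents and pairwise distinct $y^2$-exponents, the two generalised Vandermonde determinants in $(2j+1)^2$ and $(2i+1)^2$ being non-vanishing for distinct indices.

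The heart of the matter — and the step I expect to be the main obstacle — is the tracing of the cancellations. When $\vec a=\vec 0$ the coefficient matrix $\bigl(c^{(0)}_{l,k}(n)\bigr)$ is independent of the row, the double Cauchy--Binet applies cleanly, and the minimal admissible index sets $\{0,\dots,p-1\}$ in both variables force $\Delta_{\vec 0}(n)$ to be of order exactly $n^{-p^2}$. For general $\vec a$ the coefficient matrix depends on the row through $a_i$, the determinant $\det_{i,j}\bigl(F_{a_i}(x_j,y_i;n)\bigr)$ acquires apparent leading terms of order strictly larger than $n^{-p^2}$, and the content of the lemma is that these higher-order terms cancel: after weighting by the $\omega_{k,\vec a}$ of Lemma~\ref{lem:g_a_asymptotics}, which are invariant under permutations of the entries of $\vec a$, and summing over $\vec a$, only the term of order $n^{-p^2}$ survives, the coefficient of $\Delta_{\vec a}(n)$ at that order being
\[
  2^{-p}(-4)^{|\vec a|_1}\det_{0\le i,j<p}\left(\frac{(2i+2j+2)!}{(i+j+1-a_i)!\,(2a_i)!}\right).
\]
I would prove this by tracking the double Cauchy--Binet expansion order by order: the factors $4^{-(l+k+1-a_i)}$ coming from the two Gaussians, together with the binomials $\binom{2(l+k+1)}{2l+1}$, collapse to the factorial entries of this determinant, while the symmetry of the weights $\omega_{k,\vec a}$ is what annihilates the higher-order contributions. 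The rapidly increasing intricacy of this cancellation as $p$ grows is precisely the difficulty alluded to in the introduction.

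Finally, I would feed the surviving $n^{-p^2}$-behaviour of $\Delta_{\vec a}(n)$ into the reduced formula and invoke Lemma~\ref{lem:g_a_asymptotics}. The $\omega_{s-1,\vec a}n^{s/2}$ and $\omega_{s-2,\vec a}n^{(s-1)/2}$ parts of $g_{s-1,\vec a}$ and $g_{s-2,\vec a}$ assemble into the two main terms $s\lambda_s n^{s/2}-3\binom{s}{2}\lambda_{s-1}n^{(s-1)/2}$, with $\lambda_s$ and $\lambda_{s-1}$ exactly as in the statement. The constant terms $\bigl(1-B_{k+1}(-1)^k/(k+1)\bigr)[\vec a=\vec 0]$ of the $g_{k,\vec a}$ contribute only through $\vec a=\vec 0$ and, using the determinant evaluation $\det_{0\le i,j<p}\bigl((2i+2j+2)!/(i+j+1)!\bigr)=2^{p^2}\prod_{i=0}^{p-1}(2i+1)!$, account for the constant $-\tfrac32\,2^{p^2}\prod_{i=0}^{p-1}(2i+1)!$. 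The $\Omega_k(n)$-parts, of order $n^{p/2}$ and carrying the residue coefficient $\prod_j(2a_j)!/(4^{a_j}a_j!)$, combine with $\Delta_{\vec a}(n)$ to give a contribution of order $n^{p/2-p^2}$ (with at worst a logarithm when $p\in\{s-1,s\}$), which is absorbed into the error term; together with the next-order corrections from Lemma~\ref{lem:approx_binom_quot}, from the tail $|m-z|>n^{5/8}$, and from the $O\!\left(g_{s-3,\vec a}\right)$ terms, this yields the error $O\!\left(n^{s/2-1}+n^{p/2-p^2}\log n\right)$.
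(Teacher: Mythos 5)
Your plan reproduces the paper's skeleton (factor $\binom{2n}{n}$ out of each row, expand the binomial quotients via Lemma~\ref{lem:approx_binom_quot}, reduce the $(h,\vec m)$-sums to the functions $g_{k,\vec a}$ of Lemma~\ref{lem:g_a_asymptotics}, then assemble), but it handles the crucial cancellation-tracing step by a genuinely different device. The paper keeps the shifts as \emph{free variables} $(\vec x,\vec y,z)$, observes that the resulting polynomial $P_N$ vanishes on the hyperplanes $x_i=x_j$, $y_i=y_j$, $x_i=-z-x_j$, $y_i=-z-y_j$ (a vanishing which itself already uses the symmetry of the $\vec m$-sum), deduces divisibility by a product of total degree $2p^2$, and reads everything off from the coefficient $C(n)$ of a single minimal monomial; the suppression of the $\Omega_k(n)$-terms of order $n^{p/2}\log n$ is then handled by a separate hypergeometric (Chu--Vandermonde) evaluation inside the determinant entries. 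You instead stay at the specialized values, expand each entry as an odd power series in $x_j=2j+1$ and $y_i=2i+1$, force distinct column exponents by column coincidence, and kill repeated row exponents by the permutation symmetry of the weights. This is a legitimate alternative, and it even unifies the two cancellation mechanisms of the paper into one: since every weight that occurs ($G_{s,\vec a}$ itself, hence also $\omega_{k,\vec a}$, $\prod_j(2a_j)!/(4^{a_j}a_j!)$ and $[\vec a=\vec 0]$) is invariant under permutations of the entries of $\vec a$, a single argument bounds $\sum_{\vec a}\Delta_{\vec a}(n)W(\vec a)$ by $O(n^{-p^2})$ for any symmetric weight $W$, which disposes of the $\Omega_k$-parts without any hypergeometric computation.

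The caveat is that the step you yourself call the heart of the matter is the one you do not actually carry out: ``tracking the double Cauchy--Binet expansion order by order'' is not an argument, and as you note, the individual $\Delta_{\vec a}(n)$ for $\vec a\neq\vec 0$ are genuinely \emph{larger} than $n^{-p^2}$ (e.g.\ $\Delta_{(0,1)}(n)\asymp n^{-3}$ for $p=2$), so something must be proved. The precise statement needed is: for a fixed exponent pattern $(\vec l,\vec k)$ with $k_i=k_{i'}$ for some $i\neq i'$, one has $\sum_{\vec a}W(\vec a)\det_{0\le i,j<p}\bigl(C^{(a_i)}_{l_j,k_i}\bigr)=0$. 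This does follow from your stated mechanism, by pairing $\vec a$ with the vector $\vec a'$ obtained by exchanging $a_i$ and $a_{i'}$: because $k_i=k_{i'}$, the matrix for $\vec a'$ is the matrix for $\vec a$ with rows $i$ and $i'$ interchanged, so the two determinants are opposite while $W(\vec a')=W(\vec a)$ (and the self-paired terms $a_i=a_{i'}$ have two equal rows). Together with column coincidence for repeated $l_j$, this forces $|\vec l|_1,|\vec k|_1\ge\binom{p}{2}$, hence the order $n^{-|\vec l|_1-|\vec k|_1-p}\le n^{-p^2}$ and the stated leading coefficient; it is the exact analogue, in your coordinates, of the paper's divisibility argument. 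Two smaller gaps should also be closed: your closed form for the row coefficients silently drops the Stirling correction terms $F_{r,l}$ of Lemma~\ref{lem:approx_binom_quot} (they respect the parity structure and only perturb at relative order $n^{-1}$, but they must be carried along, as in the paper's $T_{2u;N}$, to justify the error terms), and your expansion produces infinite sums over $(\vec l,\vec k,\vec a)$ --- where the paper's truncation keeps everything polynomial --- so the interchanges of summation require an absolute-convergence justification.
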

\begin{proof}
		Substituting the determinant expressions (\ref{eq:exact_M_2n}) and (\ref{eq:exact_M_2n_h}) for
		$M_{2n}^{(p)}$ and $M_{2n,h}^{(p)}$ we see that
		\begin{multline*}
				\sum_{h\ge 1}\left( h^s-(h-1)^s \right)\left( M_{2n}^{(p)}-M_{2n,h}^{(p)} \right) \\
			=\sum_{h\ge 1}\left( (h-1)^s-h^s \right)
			\sum_{\vec m\neq\vec 0}
			\det_{0\le i,j<p}\left( \binom{2n}{n+m_i(h+1)+i-j}-\binom{2n}{n+m_i(h+1)-1-i-j} \right).
		\end{multline*}
		Instead of determining asymptotics for the right-hand side expression above directly we consider the more
		general quantity
		\begin{align*}
			D_n(\vec x,\vec y,z)
			&=
			\sum_{h\ge 1}\left( (h-1)^s-h^s \right)
			\sum_{\substack{\vec m\in (h+1)\Z^p\\ \vec m\neq \vec 0}}
			\det_{0\le i,j<p}\left( \binom{2n}{n+m_i+x_i-y_j}-\binom{2n}{n+m_i-z-x_i-y_j} \right).
		\end{align*}
		Now, we factor $\binom{2n}{n}$ out of each row of the determinant above, and restrict the sum above to those
		$(p+1)$-tuples $(h,m_0,\dots,m_{p-1})$ such that for $i=0,\dots,p-1$ we have $|(h+1)m_i|\le n^{1/2+\varepsilon}$
		for some fixed $\varepsilon$ satisfying $0<\varepsilon\le 1/8$.
		Since, by Stirling's formula, we have
		\[ 
			\frac{\binom{2n}{n+\alpha}}{\binom{2n}{n}}=O\left( e^{-n^{2\varepsilon}} \right),
			\qquad n\to\infty,
		\]
		whenever $|\alpha|\ge n^{1/2+\varepsilon}$, we see that the sum of all terms failing to satisfy the condition
		above is $O\left( n^{-M} \right)$ for all $M>0$ and, therefore, is negligible.

		In the remaining sum we replace all quotients of binomial coefficients by their asymptotic series
		expansion as given by Lemma~\ref{lem:approx_binom_quot}.
		Having done so, we extend the range of summation to $\N\times(\Z^p-\left\{ \vec 0 \right\})$. This adds some
		additional terms, their sum being exponentially small and, therefore, again negligible.
		This technique of truncating the (exponentially small) tail of the exact sum,
		replacing the addends by their asymptotic expansion 
		and finally adding a new (exponentially small) tail to the resulting sum has also been applied by
		de Bruijn, Knuth and Rice~\cite{MR0505710}.
		
		This procedure yields, upon noticing some cancellations due to
		summation over $\vec m$ which eliminates all odd powers
		of $m_i$ for $i=0,\dots,p-1$, for arbitrary $N>0$ the expression
		\begin{multline}
				P_N(\vec x,\vec y,z) \\
				=
				\sum_{h\ge 1}\left( (h-1)^s-h^s \right)
				\sum_{\substack{\vec m\in (h+1)\Z^p \\ \vec m\neq \vec 0}}e^{-|\vec m|_2^2/n}
				\det_{0\le i,j<p}\left( \sum_{u=0}^{2N}\left( \frac{(y_j-x_i)^{2u}-(z+x_i+y_j)^{2u}}{n^{u}} \right)T_{2u;N}\left( \frac{m_i}{\sqrt n},n \right) \right),
				\label{eq:P_N_def}
		\end{multline}
		where 
		\begin{align}
				T_{u;N}(w,n) &=
				\phi_u(w)+\sum_{l=1}^{3N+1}n^{-l}\sum_{k=0}^{u-1}\sum_{r=1}^{2l}F_{r,l}\binom{2r}{u-k}\phi_k(w)w^{2r+k-u},
				\label{eq:T_def}
		\end{align}
		such that 
		\begin{equation}\label{eq:D_n_asymptotics}
				D_n(\vec x,\vec y,z) = 
				\binom{2n}{n}^p\Big( P_N(\vec x,\vec y,z) + O\left( n^{-2N-1}G_{s,\vec 0}(n) \right)\Big),
				\qquad n\to\infty.
		\end{equation}
		Here, the functions $\phi_k(w)$ are defined by (\ref{eq:phi_s_def}), and   
		\begin{align}
				\label{eq:G_def}
				G_{s,\vec a}(n) &= 
				\sum_{h\ge 1}\left( (h-1)^s-h^s \right)
				\sum_{\substack{\vec m\in(h+1)\Z^p \\ \vec m\neq \vec 0}}
				\left( \frac{\vec m}{\sqrt n} \right)^{2\vec a}e^{-|\vec m|_2^2/n}.
		\end{align}
		Clearly, $P_N(\vec x,\vec y,z)$ is a polynomial with respect to the variables
		$x_0,\dots,x_{p-1},y_0,\dots,y_{p-1},z$.
		Furthermore, expanding the determinants and interchanging summations in (\ref{eq:P_N_def}) reveals that
		$P_N(\vec x,\vec y,z)$ is of the form
		\begin{align}
				P_N(\vec x,\vec y,z) &=
				\sum_{\substack{\vec i,\vec j, \vec l \ge\vec 0 \\ |\vec i|_1+|\vec j|_1+|\vec l|_1\ \textrm{even}}}
				\frac{\vec x^{\vec i}\vec y^\vec j z^{|\vec l|_1}}{n^{(|\vec i|_1+|\vec j|_1+|\vec l|_1)/2}}
				\left(
				\sum_{\vec a\ge\vec 0}q_{\vec a,\vec i,\vec j,\vec l}\left( n^{-1} \right)
				G_{s,\vec a}(n)
				\right)
				\label{eq:P_N_expr1}
		\end{align}
		for some polynomials $q_{\vec a,\vec i,\vec j,\vec l}(n^{-1})$ in $n^{-1}$.
		Noting that $[w^{2a}]T_{2u+1;N}(w,n)=0$ for all $a$ by (\ref{eq:T_def}) we obtain upon
		extracting the corresponding coefficients in (\ref{eq:P_N_def}) the explicit representation
		\begin{align}
				\label{eq:q_a_explicit_rep}
				q_{\vec a,\vec v,\vec w,\vec l}(n^{-1})
				&=
				\det_{0\le i,j<p}\left( \binom{v_i+w_j+l_i}{v_i,w_j,l_i}\Big((-1)^{v_i}[l_i=0]-1 \Big)\left[ w^{2a_i} \right]T_{v_i+w_j+l_i;N}(w,n) \right),
		\end{align}
		and short calculations show that
		\begin{multline*}
				\left[ w^{2a} \right]T_{2u;N}(w,n) \\
				=
				\frac{(-1)^{u+a}}{(u+a)!}\binom{u+a}{2a}4^a
				+
				\sum_{l=1}^{3N+1}n^{-l}\sum_{r=1}^{2l}F_{r,l}\sum_{k=0}^{2u-1}\frac{(-1)^{u+a-r}}{(u+a-r)!}\binom{u+a-r}{2a+2u-2r-k}2^{2a+2u-2r-k}.
		\end{multline*}
		By expanding $(h-1)^s-h^s$ in powers of $(h+1)$ in (\ref{eq:G_def}) and interchanging summations we see that
		\begin{equation} \label{eq:G_a_expansion}
				G_{s,\vec a}(n) = \sum_{k=0}^{s-1}\binom{s}{k}(2^{s-k}-1)(-1)^{s-k}g_{k,\vec a}(n),
		\end{equation}
		where the functions $g_{k,\vec a}(n)$ are defined in Lemma~\ref{lem:g_a_asymptotics}.

		Thus, we are led to consider sums of the form
		\[ \sum_{\vec a\ge\vec 0}q_{\vec a,\vec v,\vec w,\vec l}(n^{-1})g_{k,\vec a}(n). \]
		Now, we replace $g_{k,\vec a}(n)$ by its asymptotic expansion~(\ref{eq:g_a_asymptotics}), viz.
		\[
		g_{k,\vec a}(n)=\left( \prod_{j=0}^{p-1}\frac{(2a_j)!}{4^{a_j}a_j!} \right)\Omega_k(n)
		+\omega_{k,\vec a}n^{(k+1)/2}
		+[\vec a=\vec 0]\left(1-B_{k+1}\frac{(-1)^k}{k+1}  \right) +O(n^{-M})
		\]
		as $n\to\infty$ for all $M>0$. The quantities $\Omega_k(n)$ and $\omega_{k,\vec a}$ have already been defined in
		Lemma~\ref{lem:g_a_asymptotics}.

		The multi-linearity of the determinant in (\ref{eq:q_a_explicit_rep}) then shows that
		\begin{multline*}
		\sum_{\vec a\ge\vec 0}q_{\vec a,\vec v,\vec w,\vec l}(n^{-1})
		\left( \prod_{j=0}^{p-1}\frac{(2a_j)!}{4^{a_j}a_j!} \right)\Omega_k(n) \\
		=
		\Omega_k(n)
		\det_{0\le i,j<p}\left( \binom{v_i+w_j+l_i}{v_i,w_j,l_i}\Big( [l_i=0](-1)^{v_i}-1 \Big)
		\sum_{a\ge 0}\frac{(2a)!}{4^aa!}\left[ w^{2a_i} \right]T_{v_i+w_j+l_i;N}(w,n)\right).
		\end{multline*}
		The sum inside the determinant is further seen to be
		\begin{multline*}
			\sum_{a\ge 0}\frac{(2a)!}{4^aa!}\left[ w^{2a} \right]T_{2u}(w,n) \\
			=
			\sum_{l=1}^{3N+1} n^{-l}\sum_{r=1}^{2l}F_{r,l}\sum_{k=2(u-r)}^{2u-1}\frac{(-1)^{a+u-r}}{2^{2r+k-2u}}
			\frac{(2r+k-2u)!}{(k+r-u)!}\sum_{a\ge 0}(-1)^a\binom{k+r-u}{a}\binom{2a}{2r+k-2u}.
		\end{multline*}
		By the Chu-Vandermonde summation formula we obtain for the innermost sum above
		\begin{multline*}
			\sum_{a\ge 0}(-1)^a\binom{k+r-u}{a}\binom{2a}{2r+k-2u} \\
			={}_2F_1
			\left[\begin{array}{c}
				-\lfloor k/2\rfloor, \frac{1}{2}+r-u+\lceil k/2\rceil \\
				\frac{1}{2}+\lceil k/2 \rceil - \lfloor k/2 \rfloor
			\end{array};1\right]
			=
			\frac{\Gamma\left( \frac{1}{2}+\lceil k/2 \rceil - \lfloor k/2 \rfloor \right)}
				 {\Gamma\left( \frac{1}{2}+\lceil k/2\rceil \right)}
			\frac{\Gamma(u-r)}{\Gamma(u-r- \lfloor k/2\rfloor )}
		\end{multline*}
		and, from the fact that $\lfloor k/2\rfloor \ge u-r$ on the right-hand side of the second to last equation above, 
		we conclude that all terms having $r<u$ vanish, since in these
		cases this last sum evaluates to zero.
		But this shows that
		\begin{align*}
			\sum_{a\ge 0}\frac{(2a)!}{4^aa!}\left[ w^{2a} \right]T_{2u}(w,n) 
			&=
			O\left( n^{-u/2} \right),
			\qquad n\to\infty,
		\end{align*}
		and we infer that
		\[
		\sum_{\vec a\ge\vec 0}q_{\vec a,\vec v,\vec w,\vec l}(n^{-1})
		\left( \prod_{j=0}^{p-1}\frac{(2a_j)!}{4^{a_j}a_j!} \right)\Omega_k(n)
		=
		O\left( \Omega_k(n)n^{-(|\vec v|_1+|\vec w|_1+|\vec l|_1)/2} \right),
		\qquad n\to\infty,
		\]
		and further, noting that $\Omega_k(n)=O(n^{p/2}\log n)$ as $n\to\infty$,
		\begin{multline}\label{eq:sum_q_g_asymptotics}
		\sum_{\vec a\ge\vec 0}q_{\vec a,\vec v,\vec w,\vec l}(n^{-1})g_{k,\vec a}(n)
		=
		\left( \sum_{\vec a\ge \vec 0}\omega_{k,\vec a} q_{\vec a,\vec v,\vec w,\vec l}(n^{-1})\right)n^{(k+1)/2} \\
		+q_{\vec 0,\vec v,\vec w,\vec l}(n^{-1})\left(1-B_{k+1}\frac{(-1)^k}{k+1}  \right)
		+O\left(n^{(p-|\vec v|_1-|\vec w|_1-|\vec l|_1)/2}\log n \right)
	\end{multline}
	as $n\to\infty$.

	Now, lets turn back to Equation~(\ref{eq:P_N_def}). 
	Since the determinants involved in the definition of $P_N(\vec x,\vec y,z)$ vanish
	whenever $x_i=x_j$ or $y_i=y_j$ for some $i\neq j$ or $x_i=-z-x_j$ or $y_i=-z-y_j$ for some $i$ and $j$,
	we conclude that $P_N(\vec x,\vec y,z)$, which is a polynomial with respect to the variables $\vec x$, $\vec y$ and
	$z$, is divisible by
	\[ 
		\left( \prod_{0\le i<j<p}(x_i-x_j)(y_i-y_j) \right)
		\left( \prod_{0\le i\le j<p}(z+x_i+x_j)(z+y_i+y_j) \right).
	\]
	Consequently, all monomials of $P_N(\vec x,\vec y,z)$ have total degree $\ge2p^2$. Furthermore, we see that
		\begin{multline}
				P_N(\vec x,\vec y,z) \\ = 
				\left( \prod_{0\le i<j<p}(x_i-x_j)(y_i-y_j) \right)
				\left( \prod_{0\le i\le j<p}(z+x_i+x_j)(z+y_i+y_j) \right)
				n^{-p^2}C(n)
				\left( 1+O\left(n^{-1}  \right) \right)
				\label{eq:P_N_expr2}
		\end{multline}
		for some unknown function $C(n)$ as $n\to\infty$. This function $C(n)$ can be determined by comparing
		the coefficient of $\prod_{i=0}^{p-1}x_i^{2i+1}y_i^{2i+1}$ in (\ref{eq:P_N_expr1}) and (\ref{eq:P_N_expr2}).
		In this way we obtain
		\begin{equation}\label{eq:C_n_identity}
		n^{-p^2}\sum_{\vec a\ge\vec 0}q_{\vec a,\vec J,\vec J,\vec 0}(n^{-1})G_{s,\vec a}(n)
		=4^pn^{-p^2}C(n)\left( 1+O(n^{-1}) \right),\qquad n\to\infty,
		\end{equation}
		where $\vec J=(1,3,\dots,2p-1)$.
		Since 
		\begin{equation}\label{eq:G_s,a_asymptotics}
		G_{s,\vec a}(n)=-sg_{s-1,\vec a}(n)+3\binom{s}{2}g_{s-2,\vec a}(n)+ O(g_{s-3,\vec a}(n)),
		\qquad n\to\infty,
		\end{equation}
		by (\ref{eq:G_a_expansion}), we see by (\ref{eq:sum_q_g_asymptotics}) that 
		\[
		\sum_{\vec a\ge\vec 0}q_{\vec a,\vec J,\vec J,\vec 0}(n^{-1})G_{s,\vec a}(n)
		= -\sum_{\vec a\ge\vec 0}q_{\vec a,\vec J,\vec J,\vec 0}(n^{-1})
			\left(sg_{s-1,\vec a}(n)-3\binom{s}{2}g_{s-2,\vec a}(n) \right)
			+O\left(n^{s/2-1}+n^{p/2-p^2}\log n  \right)
		\]
		as $n\to\infty$.
		Noting that
		\[
		q_{\vec a,\vec J,\vec J,\vec 0}(n^{-1})=
		\frac{2^p(-4)^{|\vec a|_1}}{\left( \prod_{i=0}^{p-1}(2i+1)! \right)^2}
		\det_{0\le i,j<p}\left( \frac{(2i+2j+2)!}{(i+j+1-a_i)!\ (2a_i)!} \right)
		+O(n^{-1}),
		\qquad n\to\infty,
		\]
		we further see by (\ref{eq:sum_q_g_asymptotics}) that
		\[
		\sum_{\vec a\ge\vec 0}q_{\vec a,\vec J,\vec J,\vec 0}(n^{-1})G_{s,\vec a}(n)=
		\frac{2^p}{\left( \prod\limits_{i=0}^{p-1}(2i+1)! \right)^2}
		\left( s\lambda_s n^{s/2}-3\binom{s}{2}\lambda_{s-1}n^{(s-1)/2}+\lambda_0
			+O\left( n^{s/2-1}+n^{p/2-p^2}\log n \right) \right)
		\]
		as $n\to\infty$, where
		\[
		\lambda_k=-\sum_{\vec a\ge\vec 0}(-4)^{|\vec a|_1}
		\det_{0\le i,j<p}\left(\frac{(2i+2j+2)!}{(i+j+1-a_i)!\ (2a_i)!}   \right)\omega_{k-1,\vec a},
		\qquad k>0,
		\]
		and
		\[
		\lambda_0 = -\frac{3}{2}\det_{0\le i,j<p}\left( \frac{(2i+2j+2)!}{(i+j+1)!} \right).
		\]
		Here, the constant $\lambda_0$ is of interest only in the case $s=1$ (it can be absorbed into the $O$-term otherwise),
		and comes from the asymptotic expansion of $g_{1,\vec 0}(n)$.
		
		Now, with the help of Equation~(\ref{eq:C_n_identity}) we can determine asymptotics for the function $C(n)$, which gives
		us asymptotics for $P_N(\vec x,\vec y,z)$ by Equation~(\ref{eq:P_N_expr2}), and finally also
		asymptotics for $D_N(\vec x,\vec y,z)$ by Equation~(\ref{eq:D_n_asymptotics}).

		The proof is now completed upon specialising to $x_i=y_i=i$ for $i=0,\dots,p-1$ and $z=1$ in the asymptotics for
		$D_N(\vec x,\vec y,z)$. For sake of convenience we finally note the identities
		\begin{align*}
		\left(\prod_{0\le i<j<p}(i-j)^2\right)\left(\prod_{0\le i\le j<p}(1+i+j)^2\right) &= \prod_{i=0}^{p-1}(2i+1)!^2, \\
		\det_{0\le i,j<p}\left( \frac{(2i+2j+2)!}{(i+j+1)!} \right) &= 2^{p^2}\prod_{i=0}^{p-1}(2i+1)!.
		\end{align*}
		The second identity can be proved by means of standard determinant evaluation techniques
		(see \cite{MR1701596} for details).
\end{proof}

Finally, we can state and prove the main result of this paper.
\subsection*{Proof of Theorem~\ref{thm:sth-moment-asymptotics}}
		Replacing $M_{2n}^{(p)}$ and the sum in Equation~(\ref{eq:wm_with_wall_sth_moment_def})
		with their asymptotic expansions as given by  Lemma~\ref{lem:wm_with_wall_asymptotic_total} and
	Lemma~\ref{lem:wm_with_wall_asymptotic_for_sum} we see that
	\[
	\E H_{n,p}^{s}=s\kappa_{s}^{(p)}n^{s/2}-3\binom{s}{2}\kappa_{s-1}^{(p)}n^{(s-1)/2}-\frac{3}{2}
	+O\left( n^{s/2-1}+n^{p/2-p^2}\log n \right)
	,\qquad n\to\infty,
	\]
	where, for $k>0$, 
	\[
	\kappa_k^{(p)}=-\frac{1}{M_p}\sum_{\vec a\ge\vec 0}(-4)^{|\vec a|_1}
	\det_{0\le i,j<p}\left( \frac{(2i+2j+2)!}{(i+j+1-a_i)!\ (2a_j)!} \right)\omega_{k-1,\vec a}.
	\]
	The quantity $\omega_{k-1,\vec a}$ has already been defined in Lemma~\ref{lem:g_a_asymptotics}.
	
	In order to prove the integral representation for $\kappa_s^{(p)}$ when $s\neq p$,
	where we have $\omega_{k-1,\vec a}=\frac{1}{2}\Gamma\left( \frac{k}{2}+|\vec a|_1 \right)Z_{2\vec a}\left( \frac{k}{2}+|\vec a|_1 \right)$, 
	we consider the more general expression
		\begin{align*}
				\kappa^{(p)}(z) &= -\frac{1}{2 M_p}  
				\sum_{\vec a\ge \vec 0}
				\det_{0\le i,j<p}\left( \frac{(2i+2j+2)!(-4)^{a_i}}{(i+j+1-a_i)!(2a_i)!} \right)
				\Gamma\left( z+|\vec a|_1 \right)Z_{2\vec a}\left( z+|\vec a|_1 \right) \\
				&=
				-\frac{\pi^z}{2 M_p}
				\int_0^\infty t^{z-1}
				\left(
					\det\left( \sum_{a\ge 0}\frac{(2i+2j+2)!(t/\pi)^a}{(i+j+1-a)!(2a)!}\vartheta_{2a}(t) \right)
					-M_p
				\right)dt
		\end{align*}
		for $\Re z$ sufficiently large.
		Here, the second line is a direct consequence of Equation~(\ref{eq:Z_riemann}).
		The reciprocity relation~(\ref{eq:vartheta_a_identity}) followed by the change of variables $t\mapsto t^{-1}$ then shows that
		\[
		\kappa^{(p)}(z) = \frac{\pi^z}{2}
		\int_0^\infty t^{-z-1}\left(1 - \frac{t^{p^2+p/2}}{(-\pi)^{p^2}}\frac{T_p(t)}{M_p} \right)dt.
		\]
		Asymptotics for $\vartheta_{2a}(t)$ for $t\to 0$ and $t\to\infty$ as given in the proof of
		Lemma~\ref{lem:Z_vertical_growth} then show that this last integral is convergent for $\Re z>0$.
		The representation for $s\neq p$ is now proved upon observing that, by definition, we have 
		$\kappa^{(p)}\left( \frac{s}{2} \right)=\kappa_s^{(p)}$.

		Now, consider the case $s=p$. Here, we have
		\[
		\omega_{p-1,\vec a}=\frac{1}{2}
		\lim_{z\to p/2}\left(Z_{2\vec a}\left( z+|\vec a|_1 \right)\Gamma(z+|\vec a|_1)
		-\frac{\pi^{p/2}\left( \prod_{i=0}^{p-1}\frac{(2a_i)!}{4^{a_i}a_i!} \right)}
			  {\left( z-\frac{p}{2} \right)}
		\right),
		\]
		which shows that, as in the other case,
		\[
		-\frac{1}{M_p}\sum_{\vec a\ge \vec 0}
			\det_{0\le i,j<p}\left( \frac{(2i+2j+2)!(-4)^{a_i}}{(i+j+1-a_i)!(2a_i)!} \right)
			\omega_{p-1,\vec a}=
			\lim_{z\to p/2}
			\kappa^{(p)}(z) = \kappa^{(p)}\left( \frac{p}{2} \right).
		\]
		In this last calculation, we have, after interchanging the order of the limit and the sum,
		applied the results obtained in the case $s\neq p$.
		This proves Theorem~\ref{thm:sth-moment-asymptotics}.

	\begin{remark}
		\label{rem:complex_moments}
		It can be shown that Theorem~\ref{thm:sth-moment-asymptotics} is even valid for $s\in\C$, $\Re(s)>0$.
		The proof of this more general result is the same as for our theorem except for two small changes which we are
		going to address now.

		In the proof of Lemma~\ref{lem:wm_with_wall_asymptotic_for_sum} we defined the functions $G_{s,\vec a}(n)$
		(see Equation~(\ref{eq:G_def})). For $s\in\N$ the asymptotics~(\ref{eq:G_s,a_asymptotics}) for $G_{s,\vec a}(n)$
		were easily found by the expansion~(\ref{eq:G_a_expansion}). This is not possible for $s\in\C\setminus\N$.
		In order to prove the asymptotics~(\ref{eq:G_s,a_asymptotics}) in that case we note that
		(see Equation~(\ref{eq:G_def}))
		\[
		(h-1)^s-h^s = (h+1)^s\left( \left( 1-\frac{2}{h+1} \right)^s-\left( 1-\frac{1}{h+1} \right)^s \right).
		\]
		The term for $h=1$ in (\ref{eq:G_def}) is seen to be negligible due to summation over $\vec a\ge\vec 0$
		(see the discussion of the function $\Omega_k(n)$ following Equation~(\ref{eq:G_a_expansion}) in the proof of
		Lemma~\ref{lem:wm_with_wall_asymptotic_for_sum}).
		For $h\ge 2$, we can use the binomial series expansion in the expression above and finally obtain the
		asymptotics~(\ref{eq:G_s,a_asymptotics}).

		The second change concerns Lemma~\ref{lem:g_a_asymptotics}, which has to be generalised to $k\in\C$. But this
		makes no difficulties.
\end{remark}
	
\section{Proof of Theorem~\ref{thm:centrallimitlaw}: A central limit law}

We are now going to derive the claimed asymptotics for the cumulative
distribution function of the random variable ``height'' on the set of $p$-watermelons with length $2n$ with wall, i.e., 
\[ F_n(h) = \P\left\{ H_{n,p}\le h \right\} = \frac{M_{2n,h+1}^{(p)}}{M_{2n}^{(p)}} \]
for the range $h+2=t\sqrt{n}$, where $t\in(0,\infty)$.
		
The result can be proved in pretty much the same way as Theorem~\ref{thm:sth-moment-asymptotics}. Therefore, 
		we only give a rather brief account of the proof,
		and refer to Lemma~\ref{lem:wm_with_wall_asymptotic_for_sum} for the details.
		
		Instead of 
		the exact expression~(\ref{eq:exact_M_2n_h}) for $M_{2n,h}^{(p)}$ we consider the more general 
		quantity
		\begin{equation}
				F_n(h;\vec x,\vec y,z) =
				\binom{2n}{n}^p
				\det_{0\le i,j<p}\left( \sum_{m\in(h+2)\Z}\frac{\binom{2n}{n+m+x_i-y_j}}{\binom{2n}{n}}-\frac{\binom{2n}{n+m-z-x_i-y_j}}{\binom{2n}{n}} \right).
				\label{eq:cdf_generalisation}
		\end{equation}
		Again, we find the polynomial
		\[
		Q_N(\vec x,\vec y,z)=
		\det_{0\le i,j<p}\left( \sum_{u=0}^{2N} \frac{(y_j-x_i)^{2u}-(z+x_i+y_j)^{2u}}{n^u}\sum_{m\in(h+2)\Z}T_{2u;N}\left( \frac{m}{\sqrt n},n \right)e^{-m^2/n} \right),
		\]
		such that
		\[
		F_n(h;\vec x,\vec y,z) = \binom{2n}{n}^p \left( Q_N(\vec x,\vec y,z)
		+O\left(n^{-2N-1}\sum_{m\in(h+2)\Z}e^{-m^2/n}\right)\right),
		\]
		where $N$ can be chosen arbitrarily large and $T_{2u;N}$ being defined by (\ref{eq:T_def}).
		The polynomial $Q_N(\vec x,\vec y,z)$ is seen to be divisible by
		\[
		\left( \prod_{0\le i<j<p}(x_i-x_j)(y_i-y_j) \right)\left( \prod_{0\le i\le j<p}(z+x_i+x_j)(z+y_i+y_j) \right)
		\]
		since the determinant in the definition of $Q_N(\vec x,\vec y,z)$ vanishes whenever
		$x_i=x_j$ or $y_i=y_j$ for some $i\neq j$ or $x_i=-z-x_j$ or $y_i=-z-y_j$ for some $i$ and $j$.
		Hence, 
		\[
		Q_N(\vec x,\vec y,z)
		=
		\left( \prod_{0\le i<j<p}(x_i-x_j)(y_i-y_j) \right)\left( \prod_{0\le i\le j<p}(z+x_i+x_j)(z+y_i+y_j) \right)
		C(t)\left( 1+O(n^{-1}) \right)
		\]
		as $n\to\infty$ for some unknown constant $C(t)$.
		Now, we are going to determine asymptotics for $C(t)$ as $n\to\infty$. This task can be accomplished by comparing the coefficients
		of the monomial $\prod_{0\le i<p}x_i^{2i+j}y_i^{2i+1}$ in the expression above and the defining expression for $Q_N(\vec x,\vec y,z)$.
		We obtain
		\[
		4^pC(t)\left( 1+O(n^{-1}) \right)
		=
		(-2)^{p}n^{-p^2}
		\det_{0\le i,j<p}\left(\binom{2i+2j+2}{2i+1}\sum_{m\in \Z}T_{2i+2j+2;N}\left( mt,n \right)e^{-(mt)^2}  \right).
		\]
		Recalling the definition of the functions $T_{2a;N}(w,n)$ (see Equation~(\ref{eq:T_def})), our attention is drawn to sums of the form
		\[ (2a)!\sum_{n=-\infty}^{\infty}\phi_{2a}(mt)e^{-(mt)^2},\qquad a\in\N, \]
		where the polynomials $\phi_{2a}$ are defined by (\ref{eq:phi_s_def}).

		Now, rewriting the reciprocity relation~(\ref{eq:vartheta_a_identity}) as 
		\[
		\vartheta_{2a}\left( \frac{1}{y} \right)=y^{a+1/2}\pi^a\sum_{n=-\infty}^{\infty}(2a)!\phi_{2a}\left( n\sqrt{\pi y} \right)e^{-n^2\pi y},
		\]
		we see that
		\[
		(2a)!\sum_{n=-\infty}^{\infty}\phi_{2a}(mt)e^{-(mt)^2} = \frac{\sqrt{\pi}}{t^{2a+1}}\vartheta_{2a}\left( \frac{\pi}{t^2} \right).
		\]
		From the asymptotics for $\vartheta_{2a}(t)$ as given in the proof of Lemma~\ref{lem:Z_vertical_growth} we deduce that
		\begin{align*}
		\frac{\sqrt{\pi}}{t^{2a+1}}\vartheta_{2a}\left( \frac{\pi}{t^2} \right) &= \textrm{const}+O(t^{-M}),\qquad t\to \infty, \\
		\frac{\sqrt{\pi}}{t^{2a+1}}\vartheta_{2a}\left( \frac{\pi}{t^2} \right) &= [a=0]\frac{\sqrt\pi}{t}+O\left(t^{-M} \right),\qquad t\to 0,
		\end{align*}	
		for all $M>0$.
		Consequently, we obtain for $h+2=t\sqrt n$, where $t$ is fixed,
		\[
		\sum_{m\in\Z}T_{2a;N}\left(mt,n \right)e^{-(mt)^2}
		= 
		\frac{\sqrt{\pi}}{t^{2a+1}}\vartheta_{2a}\left( \frac{\pi}{t^2} \right)
		+O\left( \frac{1}{nt} \right),\qquad n\to\infty.
		\]
		Note that the constant implied by the $O$-term can be chosen independent of $t$.
		
		Theorem~\ref{thm:centrallimitlaw} is now proved upon substituting these asymptotics for our sums appearing in
		the expression for $C(t)$ above, taking out some factors, specialising to $x_j=y_j=1$, and dividing by $M_{2n}^{(p)}$.

\appendix

\bibliographystyle{plain}
\bibliography{pmelons}

\end{document}